\newtheorem{theorem}{Theorem}[section]
\newtheorem{corollary}[theorem]{Corollary}
\theoremstyle{definition}
\theoremstyle{remark}
\newtheorem{remark}[theorem]{Remark}
\numberwithin{equation}{section}
\newcommand{\Z}{\mathbb{Z}}
\newcommand{\R}{\mathbb{R}}
\newcommand{\T}{\mathbb{T}}
\newcommand{\e}{\varepsilon}
\newcommand{\pa}{\partial}
\newcommand{\Lm}{\Lambda}
\newcommand{\na}{\nabla}
\newcommand{\txtdiv}{\text{\rm div}}
\newcommand{\sgn}{\text{sgn}}
\begin{document}

\title[Uniform Calder\'{o}n-Zygmund estimates]{Uniform Calder\'{o}n-Zygmund estimates\\ in multiscale elliptic homogenization}

%    Only \author and \address are required; other information is
%    optional.  Remove any unused author tags.
%    author one information

\author{Weisheng Niu}
\address{W. Niu: School of Mathematical Science, Anhui University,
Hefei,  China}
\email{niuwsh@ahu.edu.cn}

\author{Jinping Zhuge}
\address{J. Zhuge: Morningside Center of Mathematics, Academy of Mathematics and systems science,
Chinese Academy of Sciences, Beijing, China.}
\email{jpzhuge@amss.ac.cn}

%\address{Department of Math, University of Kentucky, Lexington, KY, 40506, USA.}
%\curraddr{}
%\email{}
%\thanks{The author is supported in part by National Science Foundation grant DMS-XXXXXX.}
%\date{\today}

\subjclass[2020]{35B27}
%    The 2020 edition of the Mathematics Subject Classification is
%    now available.  If you are citing a classification from the
%    new scheme, use the following input coding instead.
%\subjclass[2020]{Primary }

%    Text of article.
%    Input .tex files

\begin{abstract}
    This paper is concerned with the elliptic equation $-\txtdiv (A_\e \na  u_\e) = \txtdiv f$ in a bounded $C^1$ domain, where $A_\e$ takes a form of $A_\e(x) = A(x/\e_1, x/\e_2,\cdots, x/\e_n)$, with $A(y_1,y_2,\cdots,y_n)$ being 1-periodic in each $y_i$. We prove the uniform Calder\'{o}n-Zygmund estimate, namely, the uniform $L^p$ boundedness of the linear map $f\mapsto \nabla u_\e$ for any $p\in (1,\infty)$ with a constant independent of small parameters $(\e_1,\e_2,\cdots, \e_n) \in (0,1]^n$. Our result includes the uniform Calder\'{o}n-Zygmund estimate in quasiperiodic elliptic homogenization (even without the Diophantine condition), which was previously unknown. The proof novelly combines the Dirichlet's theorem on the simultaneous Diophantine approximation from number theory, a technique of reperiodization, reiterated periodic homogenization and a large-scale real-variable argument. Using the idea of reperiodization, we also obtain some large-scale or mesoscopic-scale Lipschitz estimates.

    \noindent\textbf{Keywords:} Multiscale homogenization, Diophantine approximation, Calder\'{o}n-Zygmund estimate.
\end{abstract}

\maketitle
\tableofcontents

\section{Introduction}

%Quantitative homogenization and uniform estimates for elliptic equations or systems is an active area of applied and theoretical research in the past decades \cite{lions1978, al87, al91, jko, shenbook2018,armstrongbook}. 

In this paper, we study the uniform regularity for a family of linear elliptic equations or systems in divergence form in a bounded domain $\Omega \subset \R^d$,
\begin{align}\label{operaotr-n}
-\txtdiv ( A_\e(x) \na u_\e) = \txtdiv f,
\end{align}
where the coefficient matrix takes a form of $A_\e(x) = A(x/\e_1, x/\e_2,\cdots, x/\e_n)$, and $(\e_i)_{1\le i\le n} \in (0,1]^n$ are small parameters describing $n$ different oscillating scales of the coefficients. We assume that
 $A(y_1,y_2,\cdots, y_n):  \R^{d\times n} \mapsto \R^{d\times d}$ is 1-periodic in each $y_i \in \R^d$ with $1\le i\le n$. Due to the symmetry of $y_i$'s, it is harmless to assume $\e_1\ge \e_2 \ge \cdots \ge \e_n>0$. The full assumptions on $A$ will be given shortly.

The homogenization theorem (or quantitative convergence rates) and uniform regularity are the fundamental questions in homogenization theory. These two questions have been well answered for the equation \eqref{operaotr-n} in the case of one oscillating scale (i.e., $n=1$); see recent monographs \cite{shenbook2018,armstrongbook2} and references therein. For the case of multiple oscillating scales, we refer to our recent work \cite{niuzhuge2023} for a brief survey. It is important to point out that before the work \cite{niuzhuge2023}, the homogenization theorem and uniform regularity were considered closely interconnected and always appear together, in the sense that the uniform regularity can be derived as a consequence of convergence rates either by a compactness method \cite{al87,klsjams13} or a quantitative method (excess decay iteration \cite{gloria2014regularity,armstrongan2016,shenapde2017} or a real-variable argument \cite{caffarelli1998}). For the equation \eqref{operaotr-n} with $n\ge 2$, we have the qualitative homogenization theorem or a quantitative convergence rate to a deterministic homogenized equation only if the scales $(\e_1,\e_2,\cdots, \e_n)$ are separated or well-separated \cite{allaire1996}, respectively. Recall that the scales $(\e_1,\e_2,\cdots, \e_n)\in (0,1]^n$ are well-separated if $\e_{i+1} \lesssim \e_i^{\alpha_i}$ for some $\alpha_i > 1$.
% \begin{equation}\label{cond.separate}
% 	\lim_{\e\to 0} \e_1 = 0, \quad \lim_{\e \to 0} \frac{\e_{i+1}}{\e_i} = 0, \quad \txt{for all } i=1,2,\cdots, n-1.
% \end{equation} 
Consequently, under this scale-separation condition, 
the uniform regularity estimates for \eqref{operaotr-n} were established in \cite{nsxjfa2020} by a quantitative method. Yet, as pointed out by Avellaneda in \cite{avellaneda1996} (see also \cite{geradcmap2003}) such an assumption is not adequate for treating the most general problems of transport and diffusion in self-similar (particularly random) media. 
The main contribution of \cite{niuzhuge2023} is that, without any separation condition on $(\e_i)_{1\le i\le n}$, we obtained the uniform $C^\alpha$ estimate for any $\alpha \in (0,1)$ by the compactness method combined with a novel scale-reduction process. This result was unexpected since it shows for the first time that the uniform regularity holds stably even without homogenization, and in some sense the averaging effect always takes places across all the mesoscopic scales.

This paper is a continuation of our previous work \cite{niuzhuge2023}. It was conjectured in \cite{niuzhuge2023} that with $f = 0$ in \eqref{operaotr-n} and without any scale-separation condition, the solution admits uniform Lipschitz estimate. We should mention that the gradient estimate (like Lipschitz estimate or Calder\'{o}n-Zygmund estimate) cannot be established by the soft compactness method as in our previous paper without the help of correctors or convergence rates. In this paper, we partially solve the conjecture by proving a slightly weaker result, namely, the gradient $\nabla u_\e$ is uniformly bounded in $L^p$ spaces for any $p<\infty$. In particular, this recovers the uniform $C^\alpha$ estimate in \cite{niuzhuge2023} in view of the Sobolev embedding theorem. Moreover, our proof is quantitative (in contrast to the compactness method in \cite{niuzhuge2023}) in the sense that the constant can be computed explicitly.

Before stating the main result, we list the assumptions on the coefficient matrix $A$:
\begin{itemize}
	\item Strong ellipticity condition: there exists $\Lambda \in (0,1]$ so that
	\begin{equation}\label{ellipticity}
		\Lambda |\xi|^2 \le \xi \cdot A(y_1,\cdots,y_n)\xi \quad  \text{and} \quad |A(y_1,\cdots,y_n)\xi| \le \Lambda^{-1} |\xi|, 
	\end{equation}
    for every $\xi \in \R^d$ and   for any $(y_1,\cdots,y_n)\in  \R^{d\times n}.$

	\item Periodicity: for any $(y_1,\cdots,y_n) \in \R^{d\times n}$ and $(z_1,\cdots,z_n)\in \Z^{d\times n}$,
	\begin{equation}\label{periodicity}
		A(y_1+z_1, \cdots, y_n+z_n) = A(y_1,\cdots,y_n).
	\end{equation}
	
	\item Smoothness: there exists $\tau\in (0,1]$ and $L>0$ such that
	\begin{equation}\label{smoothness}
		|A(y_1,\cdots,y_n) - A(y_1',\cdots,y_n')| \le L \big\{|y_1'-y_1|+\cdots +|y_n'-y_n|\big\}^\tau
	\end{equation} for any   $(y_1,\cdots,y_n), (y_1',\cdots,y_n')\in \R^{d\times n}$.
\end{itemize}

We now state our main result.
 \begin{theorem}\label{th-w1p}
 Let $\Omega$ be a bounded $C^1$ domain. Assume $A$ satisfies \eqref{ellipticity}, \eqref{periodicity} and \eqref{smoothness} and $(\e_i)_{1\le i\le n} \in (0,1]^n$. Assume $1<p<\infty$. Let $u_\e$ be the weak solution to the Dirichlet problem
 \begin{align} \label{eq.Dirichlet}
 - \txtdiv \big( A_\e  \na u_\e \big)= \txtdiv f \, \text{ in }   \Omega, \quad   u_\e=0 \,\text{ on }  \pa \Omega,
 \end{align}
where $ A_\e=A(x/\e_1, x/\e_2, \cdots, x/\e_n)$ and  $f\in L^p(\Omega)^d$. Then $\na u_\e \in L^p(\Omega)^d$ and
 \begin{align} \label{th-b-w1p-re}
% \Big(  \int_{\Omega}     |\na u_\e|^p \Big)^{1/p} \leq C  \Big(\int_{\Omega} |f|^p\Big)^{1/p} ,
\| \nabla u_\e \|_{L^p(\Omega)} \le C\| f\|_{L^p(\Omega)},
\end{align}
where $C$ depends only on $d, n, \Lambda, p$, $(\tau,L)$ in \eqref {smoothness}, and $\Omega$.
 \end{theorem}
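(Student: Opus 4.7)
My plan is to deploy a real-variable argument of Shen type, whose essential input is a mesoscopic reverse H\"older estimate for $\na u_\e$, and to produce that input by a reperiodization step that converts the multiscale coefficient into a single-scale (or reiterated few-scale separated) periodic one, to which the classical Avellaneda--Lin uniform Lipschitz theory (or the separated-scale theory of \cite{nsxjfa2020}) already applies.

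The first step is reperiodization via Dirichlet's theorem on simultaneous Diophantine approximation. Given any parameter $Q>1$, this produces an integer $1\le q\le Q$ and integers $m_2,\ldots,m_n$ with $|q\e_1/\e_i - m_i|\le Q^{-1/(n-1)}$ for each $2\le i\le n$. Setting $R:=q\e_1$, the translation of $A_\e$ by $R$ in any coordinate direction differs from $A_\e$ by at most $CQ^{-\tau/(n-1)}$ in sup-norm, by \eqref{periodicity} and \eqref{smoothness}. I would therefore define a genuinely $R$-periodic coefficient $\wt A_\e$ coinciding with $A_\e$ on one fundamental cube of side $R$, and verify $\|A_\e-\wt A_\e\|_\infty\le CQ^{-\tau/(n-1)}$. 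The auxiliary equation $-\txtdiv(\wt A_\e \na \wt u_\e)=\txtdiv f$ is then a one-scale periodic problem of period $R$, and Avellaneda--Lin's uniform Lipschitz estimate applies at all scales $\ge R$ with constants independent of $R$.

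Combining these with an energy estimate comparing $u_\e$ and $\wt u_\e$ (which contributes an error of order $CQ^{-\tau/(n-1)}\|\na u_\e\|_{L^2}$), I expect to obtain on every ball $B_r\subset\Om$ with $r$ at a suitable mesoscopic scale and every $s<\infty$ a reverse H\"older estimate of the form
\begin{align*}
\Big(\tfrac{1}{|B_r|}\int_{B_r}|\na u_\e|^s\Big)^{1/s} \le C\Big(\tfrac{1}{|B_{2r}|}\int_{B_{2r}}|\na u_\e|^2\Big)^{1/2} + C\Big(\tfrac{1}{|B_{2r}|}\int_{B_{2r}}|f|^s\Big)^{1/s},
\end{align*}
once $Q$ is chosen large but independent of $\e$. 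In the reiterated variant suggested by the abstract, one reperiodizes only the finer scales $\e_2,\ldots,\e_n$ relative to $\e_1$, yielding a two-scale separated problem to which \cite{nsxjfa2020} directly supplies Lipschitz bounds; iterating down through the hierarchy reaches the same mesoscopic estimate. With the reverse H\"older inequality in hand on interior balls and on boundary half-balls (after flattening $\pa\Om$ using its $C^1$ structure), Shen's real-variable $L^p$ extrapolation lemma upgrades the basic $L^2$ energy inequality to the global bound $\|\na u_\e\|_{L^p(\Om)}\le C\|f\|_{L^p(\Om)}$ for every $p\in(1,\infty)$.

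The main obstacle is calibrating the reperiodization: $R=q\e_1$ must be small enough that Avellaneda--Lin gives genuine averaging on balls at the working scale, yet $Q$ must be large enough that the approximation error $Q^{-\tau/(n-1)}$ is controllable. Since $R$ and $Q$ are coupled, one has to produce a mesoscopic scale $r_*$ at which the reverse H\"older estimate holds with constants independent of $(\e_i)$, and then show that Shen's extrapolation preserves this uniformity. A further subtlety is the boundary estimate: the reperiodized problem in a flattened domain must carry a uniform $W^{1,p}$ bound, which is classical for periodic operators in $C^1$ domains but whose constant's dependence on the reperiodization parameters needs to be tracked carefully.
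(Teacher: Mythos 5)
Your main approach has a genuine gap at the reperiodization step. You claim that after one Dirichlet approximation with $R=q\e_1$, the translation of $A_\e$ by $R$ in each coordinate direction differs from $A_\e$ by at most $CQ^{-\tau/(n-1)}$, and then define a genuinely $R$-periodic $\wt A_\e$ coinciding with $A_\e$ on one cell so that $\|A_\e-\wt A_\e\|_\infty\le CQ^{-\tau/(n-1)}$. That inference is false: a single translation by $R$ shifts the $i$-th argument of $A$ by $\delta_i:=q\e_1/\e_i - m_i$, with $|\delta_i|<1/Q$, but translating $k$ times shifts it by $k\delta_i$, and the sign of $\delta_i$ is fixed so the error accumulates. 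To cover a domain of size $O(1)$ with cells of side $R$ you need $k\sim 1/R$ translations, giving $\|A_\e - \wt A_\e\|_\infty \sim L\,(1/(RQ))^\tau$. Since $R=q\e_1\le Q^{n-1}\e_1$ and $\e_1$ can be arbitrarily small, $RQ$ can be arbitrarily small and the error is not controlled. Choosing $R$ proportional to the largest scale $\e_1$ makes this worse: $R$ can even exceed the diameter of $\Omega$, in which case the approximate periodicity gives no averaging at all at scales relevant to the problem. There is also a secondary obstruction: even if $\|A_\e-\wt A_\e\|_\infty$ were small, $\wt A_\e$ oscillates at scale $\e_n\ll R$ inside one cell, so its H\"older seminorm in the rescaled variable $x/R$ is of order $(R/\e_n)^\tau$ and blows up; Avellaneda--Lin \emph{small}-scale (pointwise) Lipschitz estimates then fail to be uniform, and even the corrector/flux correctors you would need for the mesoscopic reverse H\"older bound do not come with $Q$-independent $L^p$ bounds.

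The paper avoids both problems by making the reperiodization \emph{exact}: it applies Dirichlet's theorem to the ratios $\e_n/\e_i$ (with the \emph{smallest} scale as reference, not the largest), defines $A^\sharp(y_1,\dots,y_n)=A(s_1y_1+p_1y_n,\dots,s_{n-1}y_{n-1}+p_{n-1}y_n,qy_n)$, and uses the algebraic identity $A_\e(x)=A^\sharp(x/\e_1',\dots,x/\e_n')$ with $\e_n'=q\e_n$ and $\e_i'=\e_n/\gamma_i$. There is no sup-norm approximation error at all; the price is a loss of regularity of $A^\sharp$ in $y_n$. That loss is then handled not by a one-shot reduction to Avellaneda--Lin, but by quantitative reiterated homogenization (Theorem~\ref{th-app-n}) reducing $n$ scales to $n-1$, an induction on the number of scales, and a real-variable argument that uses a \emph{double-averaging} operator precisely because the reperiodized corrector $\mathcal{X}^\sharp$ has no $Q$-uniform $L^p$ bound in $y_n$ — only the $Q$-independent $L^2$ energy estimate is available. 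Your secondary suggestion ("reperiodize only the finer scales relative to $\e_1$, yielding a two-scale separated problem") gestures in the right direction but again uses the wrong reference scale and does not address the regularity loss that forces the double-averaging device. To repair your argument you would need to (i) replace the approximate $R$-periodic coefficient by an exact reperiodization so the error does not accumulate, (ii) reperiodize relative to $\e_n$ so the new period $q\e_n$ stays small, and (iii) replace the appeal to Avellaneda--Lin by a $Q$-uniform mechanism, such as reiterated homogenization plus induction on the number of scales with careful averaging as in the paper.
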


The same uniform estimate holds if the Dirichlet boundary condition in \eqref{eq.Dirichlet} is replaced by the compatible Neumann boundary condition
\begin{equation}
    \nu \cdot A_\e \na u_\e= -\nu \cdot f \,\text{ on }  \pa \Omega,
\end{equation}
where $\nu$ is the unit outer normal vector of $\partial \Omega$.

The new ingredient in the proof of Theorem \ref{th-w1p} is the Dirichlet's theorem (see Theorem \ref{th.Dirichlet}) on simultaneous Diophantine approximation from number theory, which enters into a new technique of \emph{reperiodization} and allow us to quantitatively separate one scale from the rest $n-1$ scales, leading to a quantitative gradient approximation (this is not achievable by the rough scale separation argument in \cite{niuzhuge2023}). Precisely, for a given sequence $(\e_1,\e_2,\cdots,\e_n)$ in nonincreasing order and any number $Q>1$, we can find a new 1-periodic matrix $A^\sharp$, depending on $Q$ and the ratios $\e_{n}/\e_i$, such that $A_\e(x)$ can be rewritten as
\begin{equation}
    A(x/\e_1,x/\e_2,\cdots, x/\e_n) = A^\sharp(x/\e'_1,x/\e'_2,\cdots, x/\e'_n),
\end{equation}
with some $(\e_1',\e_2',\cdots, \e_n') \in (0,\infty)^n$ and the last scale $\e_n'$ is \emph{$Q$-separated} from the rest $\e_i'$'s (i.e., $\e'_i \ge Q\e_n'$ for all $1\le i\le n-1$); see Section \ref{sec3.1} for details. Theoretically, $Q$ can be chosen as large as $(\e_n/r)^{-\sigma}$ with some $\sigma>0$, yielding a well-separation condition and thus a good convergence rate to an approximate problem in $B_r$ with at most $n-1$ scales (see Remark \ref{rmk.rate}). However, as a payoff 
the regularity of $A^\sharp(y_1,y_2,\cdots, y_n)$ in $y_n$ will become worse as $Q$ increases, which leads to a small-scale estimate depending on $Q$ in a blow up argument. 
Fortunately, a sufficiently large $Q$ independent of $(\e_i)_{1\le i\le n}$ will be just enough for our proof. With the $Q$-separation condition, we can perform a quantitative argument of the reiterated homogenization and approximate the solution of the original equation with $n$ oscillating scales by a solution of a new equation with at most $n-1$ oscillating scales. Hence, an inductive argument on the number of scales combined with a careful real-variable argument (see Theorem \ref{th W1P real}, which is a quantitative refined version of the one originating in \cite{caffarelli1998}) involving a \emph{double-averaging estimate} will complete the proof.

%We compare the proof in this paper with the one in \cite{niuzhuge2023}. 
It is crucial to point out that the proofs for $C^\alpha$ estimate (a type of Schauder estimates) and $L^p$ gradient estimate (Calder\'{o}n-Zygmund estimate related to singular integrals) are essentially different, for the latter is stronger and more informative. As in \cite{niuzhuge2023}, the $C^\alpha$ estimate can be derived with a qualitative $H$-convergence theorem and a compactness method (the correctors are not needed; also see \cite[Theorem 3.1] {shenapde2015} or \cite[Theorem 6.1]{shen-zhuge2016} for similar situations). However, the estimate of gradient must involve a quantitative convergence rate or the fine properties of correctors. While our proof of Theorem \ref{th-w1p} also uses a scale-reduction theorem and an inductive argument on the number of scales, it is exactly the Dirichlet's theorem that helps us quantify the error in the scale-reduction process and provides much stronger implications and more information about the  behavior  of the solutions. In particular, it possibly explains the reason behind the occurrence of the mesoscopic averaging effect in arbitrary multiscale media.

As a direct consequence, Theorem \ref{th-w1p} implies the uniform Calder\'{o}n-Zygmund estimate for the operator $-\txtdiv ( A (x/\e) \na )$ with arbitrary quasiperiodic coefficient $A$ (which was previously known only under a Diophantine condition). We recall that $A(y)$ is said to be quasiperiodic if $A(y)= B( M y )$, where $B(w)$ is $1$-periodic in $w \in \R^{N}$ with $1\le N \in \mathbb{N}$ and $ M $ is an $N\times d$ constant real matrix. Note that we do not have any restriction on the entries of $M$ and therefore $A$ is allowed to be periodic with a very degenerate periodic cell.

%which was previously obtained in \cite{al91} \jz{wrong reference?}   under the additional Diophantine condition \cite{kozlov1978}. 
%\jz{I think $d\le N$ is not needed, the condition below is also not needed},
% such that
%  $M^T  z\neq 0  $  for any $ z\in \Z^{N } \setminus \{0 \}.$ 
 
\begin{corollary}\label{coro1.3}
Let $\Omega$ be a bounded $C^1$ domain. Assume that $A(y) = B(My)$ is quasi-periodic as above and $B$ satisfies \eqref{ellipticity}, \eqref{periodicity} and \eqref{smoothness} (with $n=1$).  Let $1<p<\infty$, $f\in L^p(\Omega)^d$, and $u_\e$ be the weak solution to
 \begin{align*}  
 -\txtdiv \big( A(x/\e)  \na u_\e \big) = \txtdiv f \, \text{ in }   \Omega, \quad   u_\e=0 \,\text{ on }  \pa \Omega.
\end{align*}
Then $\na u_\e \in L^p(\Omega)^d$ and
 \begin{align*}  
% \Big(  \int_{\Omega}     |\na u_\e|^p \Big)^{1/p} \leq C  \Big(\int_{\Omega} |f|^p\Big)^{1/p} ,
\| \nabla u_\e \|_{L^p(\Omega)} \le C\| f\|_{L^p(\Omega)},
\end{align*}
where $C$ depends only on $d, \Lambda, p, \Omega, N$, and $(\tau,L)$ in \eqref {smoothness}.
\end{corollary}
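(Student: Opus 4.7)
The plan is to realize the single-scale quasiperiodic coefficient $A(x/\varepsilon)=B(Mx/\varepsilon)$ as a multi-scale periodic coefficient that fits the hypotheses of Theorem~\ref{th-w1p} and then invoke that theorem directly. Let $m_1,\dots,m_N\in\R^d$ denote the rows of $M$, set $I:=\{(i,j):m_i^{(j)}\neq 0\}$ and $n:=|I|\le Nd$, and for $(i,j)\in I$ put $s_{ij}:=\sgn(m_i^{(j)})\in\{\pm 1\}$ and $\varepsilon_{ij}:=\varepsilon/|m_i^{(j)}|$. Define the inflated coefficient $\bar{A}:\R^{d\times n}\to\R^{d\times d}$ by
\begin{equation*}
\bar{A}\bigl((y_{ij})_{(i,j)\in I}\bigr):=B(w_1,\dots,w_N),\qquad w_i:=\sum_{j:(i,j)\in I}s_{ij}\,y_{ij}^{(j)},
\end{equation*}
where $y_{ij}^{(j)}$ denotes the $j$-th coordinate of $y_{ij}\in\R^d$, with $w_i:=0$ when $m_i=0$. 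The substitution $y_{ij}=x/\varepsilon_{ij}$ yields $w_i=m_i\cdot x/\varepsilon$, so $\bar{A}\bigl((x/\varepsilon_{ij})_{(i,j)\in I}\bigr)=B(Mx/\varepsilon)=A(x/\varepsilon)$, and hence $u_\varepsilon$ is also the weak solution of a multi-scale problem of the form required by Theorem~\ref{th-w1p}.

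The next step is to verify that $\bar{A}$ satisfies \eqref{ellipticity}--\eqref{smoothness} with constants independent of $M$. Ellipticity is inherited from $B$ directly. The key periodicity in each $y_{ij}\in\R^d$ holds because $\bar{A}$ depends on $y_{ij}$ only through the scalar $y_{ij}^{(j)}$ with integer coefficient $s_{ij}=\pm 1$: a lattice shift $y_{ij}\mapsto y_{ij}+z$, $z\in\Z^d$, translates $w_i$ by $s_{ij}z^{(j)}\in\Z$, leaving $B(w_1,\dots,w_N)$ invariant. Smoothness with the same pair $(\tau,L)$ as $B$ follows from $|w-w'|\le\sum_i|w_i-w_i'|\le\sum_{(i,j)\in I}|y_{ij}-y_{ij}'|$, which bounds the Euclidean distance in $\R^N$ by the $\ell^1$-sum used in \eqref{smoothness}.

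Once the hypotheses are in place, Theorem~\ref{th-w1p} applied to $\bar{A}$ with the scales $(\varepsilon_{ij})_{(i,j)\in I}$ delivers $\|\nabla u_\varepsilon\|_{L^p(\Omega)}\le C\|f\|_{L^p(\Omega)}$ with a constant depending only on $d,\Lambda,p,\Omega,N,\tau,L$ (through $n\le Nd$) and, crucially, independent of $M$ and $\varepsilon$. The main obstacle is that Theorem~\ref{th-w1p} requires $(\varepsilon_{ij})\in(0,1]^n$, a constraint that fails when $\varepsilon$ exceeds some $|m_i^{(j)}|$. I would handle the complementary regime by separating the \emph{fast} indices $J_\varepsilon:=\{(i,j)\in I:|m_i^{(j)}|\ge\varepsilon\}$ from the \emph{slow} ones: the slow contribution to each $w_i$, namely $\sum_{(i,j)\in I\setminus J_\varepsilon}m_i^{(j)}x^{(j)}/\varepsilon$, is a linear function of $x$ whose Lipschitz constant on $\Omega$ is controlled by $Nd$, so after freezing it to an auxiliary Lipschitz parameter one obtains a residual coefficient depending only on the fast scales, all of which lie in $(0,1]$. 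A local application of Theorem~\ref{th-w1p} to this residual coefficient, glued via a partition-of-unity/freezing argument on $\Omega$ together with the uniform smoothness of $B$, should then close the estimate with the claimed uniform constant.
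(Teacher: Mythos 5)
Your core reduction is the same as the paper's: you expand the quasiperiodic $B(Mx/\e)$ into a periodic matrix with $n\le Nd$ oscillating scales by treating each scalar $m_i^{(j)} x^{(j)}/\e$ as the $j$-th component of a fresh $d$-vector variable, then invoke Theorem~\ref{th-w1p}. The paper encodes this with $A^+$ and arguments $M_{ij}x/\e$; your introduction of $s_{ij}$ to keep the effective scales $\e/|m_i^{(j)}|$ positive, and your restriction to the index set $I$ of nonzero entries, are sensible cleanups that the paper leaves implicit (with $A^+$ one must still pull the sign of a negative $M_{ij}$ into the coefficient, exactly as you do). The verifications of ellipticity, 1-periodicity in each $y_{ij}$, and H\"older continuity with the same $(\tau,L)$ are all correct and match the paper's remarks.

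The one place where you diverge is the last paragraph, where you worry that some $\e_{ij}=\e/|m_i^{(j)}|$ may exceed $1$, conflicting with the stated hypothesis $(\e_i)\in(0,1]^n$ of Theorem~\ref{th-w1p}. This is a fair observation that the paper glosses over, but your proposed fix -- splitting into fast/slow indices, freezing the slow linear drift, and gluing by a partition of unity -- is not carried out and is also harder than necessary. As written, a slow index $(i,j)$ with $\e < |m_i^{(j)}|\cdot\mathrm{diam}(\Omega)\le \e\cdot\mathrm{diam}(\Omega)$ contributes a drift of size $O(1)$ over $\Omega$: it is neither oscillatory nor nearly frozen, so you would actually need a version of Theorem~\ref{th-w1p} allowing a Lipschitz $x$-dependence $A(x,x/\e_1,\ldots,x/\e_n)$ plus a standard freezing/perturbation scheme, which you have not set up. The cleaner route, implicit in the paper, is that the restriction $\e_i\le 1$ in Theorem~\ref{th-w1p} is not essential: the Dirichlet step in Section~\ref{sec3.1} only uses the \emph{ratios} $\e_n/\e_i\in(0,1]$, the scale reduction in Theorem~\ref{th-app} itself produces new scales $\e_i'=\e_i/\gamma_i$ that may be arbitrarily large, and indeed the inductive hypothesis in the proof of Theorem~\ref{th-in-w1p} is phrased for all $\e_i\in(0,\infty)$. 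So Theorem~\ref{th-w1p} applies directly to your $\bar A$ with scales $\e_{ij}\in(0,\infty)$, with no need for the fast/slow split. Apart from this detour, your argument is correct and essentially the paper's proof.
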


In the setting of quasiperiodic homogenization, the convergence rate was first obtained by Kozlov in \cite{Koz78} under a Diophantine condition (a quantitative ergodicity condition): for each row $M_i$ of $M$, $|M_i\cdot z| \ge c_0|z|^{-\beta}$ for all $z\in \Z^d\setminus \{ 0\}$ and for some $c_0,\beta>0$.
It was observed in \cite{al91} that the uniform regularity is valid under Kozlov's Diophantine condition. This condition was also required in recent work \cite{shenapde2015,AGK16,BG19} applied to the case of quasiperiodic coefficients.
In our Corollary \ref{coro1.3}, we do not need the Diophantine condition and the constant in the estimate is independent of the entries of the matrix $M$ (except for the dimension of $M$). This includes some typical degenerate cases, such as periodic coefficients with a degenerate periodic cell (e.g., a thin rectangle), quasiperiodic coefficients oscillating at two almost resonant frequencies (e.g., the ratio of the frequencies is a Liouville number), multiscale quasiperiodic coefficients, etc. 

%This indicates that our method of scale separation may also be useful for other problems with similar quasiperiodic structures or resonance phenomena. (This statement is too vague and broad)

The proof of Corollary \ref{coro1.3} based on Theorem \ref{th-w1p} is simple and we include it here. In fact, it suffices to write $A(x/\e)$ into a periodic coefficient matrix with multiple oscillating scales and apply Theorem \ref{th-w1p}.
To this end, let $M =  (M_{ij})_{1\le i\le N, 1\le j\le d}$.
% and then
% \begin{equation*}
%     B(Mx) = B(M_1\cdot x, M_2\cdot x, \cdots, M_N\cdot x)
% \end{equation*}
% and
% \begin{equation*}
%     M_j\cdot x = \sum_{i=1}^d (M_j \cdot e_i) x_i.
% \end{equation*}
Let $y_{ij} \in \R^d$ with $1\le i\le N$ and $1\le j\le d$ be $Nd$ independent variables.
Define a matrix with variables $y_{ij}$
\begin{equation*}
    A^+(y_{11},\cdots, y_{ij}, \cdots,  y_{Nd}) = B\big(\sum_{j=1}^d y_{1j}\cdot e_j, \cdots, \sum_{j=1}^d y_{Nj} \cdot e_j\big).
\end{equation*}
Then $A^+$ is 1-periodic in each $y_{ij}$ since $B(w)$ is 1-periodic. Moreover, it is straightforward to verify
\begin{equation}
    A(x/\e) = B(Mx/\e) = A^+(M_{11}x/\e, \cdots, M_{ij}x/\e, \cdots, M_{Nd}x/\e).
\end{equation}
This reduces the quasiperiodic coefficient matrix with one oscillating scale into a periodic coefficient matrix with $Nd$ oscillating scales. Clearly, in this case, all the scales are possibly not well-separated. Moreover, if $B$ is uniformly elliptic and H\"{o}lder continuous, so is $A^+$. As a result,  Corollary \ref{coro1.3} follows readily from Theorem \ref{th-w1p}.

Finally, using a similar idea of scale separation, we can show the uniform large-scale or mesoscopic-scale Lipschitz estimates. In particular, in the case of two scales, i.e., $n=2$, for any $\alpha\in (0,1)$, we have for all $\e_2^{1-\alpha} \le r\le 1$,
    \begin{equation}
        \bigg( \fint_{B_r} |\nabla u_\e|^2 \bigg)^{1/2} \le C_\alpha \bigg\{ \bigg( \fint_{B_1} |\nabla u_\e|^2 \bigg)^{1/2} + \bigg( \fint_{B_1} |F|^p \bigg)^{1/p} \bigg\},
    \end{equation}
where $C_\alpha$ depends only on $d,\Lambda,p,\tau,L$ and $\alpha$; see Theorem \ref{thm.n=2}. For general cases $n\ge 3$, we have some mesoscopic-scale Lipschitz estimates near any given scale; see Theorem \ref{thm.n>2}.

\textbf{Organization.} The rest of the paper is organized as follows. In Section \ref{sec.2}, we recall some knowledge and known results from locally periodic homogenization and reiterated homogenization. In Section \ref{sec.3}, we use the Dirichlet's theorem and reperiodization technique to separate scales and reduce one scale by a uniform approximation. The main theorem is proved in Section \ref{sec.4} by a real-variable argument. In Section \ref{sec.5}, we exploit the idea of scale separation to obtain the uniform large-scale or mesoscopic-scale Lipschitz estimates.

\textbf{Acknowledgements.} The authors would like to thank the referee for helpful comments that improve the quality of the paper. W. Niu is supported by NNSF of China (12371106, 11971031). J. Zhuge is partially supported by NNSF of China (12494541, 12288201, 12471115).

\section{Preliminaries}\label{sec.2}
\subsection{Local periodic homogenization}
\label{sec.2.1}
In this subsection, we recall some results concerning the homogenization of the locally periodic operator $-\txtdiv ( A(x,x/\e)\na)$ with a scalar $\e \in (0,1)$, which will be used in our quantitative scale reduction process in Section 3.

Suppose $A(x,y)$ satisfies the ellipticity condition \eqref{ellipticity} and is 1-periodic in $y\in \mathbb{R}^d$.
 The effective (homogenized) matrix is given by \begin{align}\label{ahat}
\widehat{A}(x)=\fint_{\mathbb{T}^d}
\Big( A(x, y) + A(x, y) \na _y \chi (x, y) \Big) dy,
\end{align}
where $\chi (x, y)= (\chi_1(x, y), \dots, \chi_d (x, y))$ is the corrector given by the cell problem
\begin{equation}\label{cell-1}
\left\{
\aligned
& -\text{\rm div}_y  \big( A  (x,   y ) \na _y  \chi_j)
=\text{\rm div}_y \big( A  (x,   y ) \na _y  y^j \big) \quad \text{ in } \mathbb{T}^d:= \R^d/\Z^d,\\
& \chi_j=\chi_j (x, y ) \text{ is 1-periodic in } y,\\
 & \int_{\mathbb{T}^d} \chi_j  (x,  y )\, dy  =0,
\endaligned
\right.
\end{equation}
for $1\le j\le d$. Here $y^j$ denotes the $j$th component of $y \in \mathbb{R}^d$.
Assume that \begin{equation}\label{lip-simple}
\|\na _x A\|_\infty
=\|\na _x A\|_{L^\infty(\mathbb{R}_x^d \times \mathbb{R}^d_y)} <\infty.
\end{equation}
The standard energy estimates for \eqref{cell-1} imply that
\begin{equation} \label{es-chi}
\|  \na_y\chi \|_{L^2(Y)} \leq C ,\quad \|\na _x  \na_y\chi \|_{L^\infty(\R_x^d; L^2(Y))} \leq C \|\na _x A\|_\infty
\end{equation} with $C$ depending only on $\Lm, d.$
%\begin{equation}\label{smoothness-n-1}
%		|A(x,y) - A(x', y)| \le L_1  |x-x'| ^{\gamma_1}
%	\end{equation}  for some $ s$, for any $x,x',y\in \R^d. $
 This combined with the definition of $\widehat{A}$ yields $$ \|\na _x \widehat{A}\|_\infty  \leq C \|\na _x  A \|_\infty.$$ 
 Likewise, if  
 \begin{equation}\label{Calpha.x}
     |A(x,y) - A(x', y)| \le L  |x-x'| ^{\tau}
 \end{equation}
 for some $0<\tau \le 1$ for any $x,x',y\in \R^d. $ 
 Then we have \begin{align} \label{shata}
    | \widehat{A}(x)-\widehat{A}(x')|  \leq C L |x-x'|^\tau. 
 \end{align} 

Since we do not assume any smoothness of $A(x,y)$ on $y$, a smoothing operator is needed to avoid the roughness of correctors at $\e$ scale, which has been a standard technique even in the one-scale problem (see \cite[Chapter 3.1]{shenbook2018}).
Let $\varphi \in C_{0}^{\infty}(\R^d)$ be supported in $B_{1/2}(0)$ such that $\varphi\geq 0$ and $\int_{\mathbb{R}^{d}}\varphi  dx=1$.
  For functions of the form $g^\e (x)= g(x, x/\e)$, we
define  a partial-smoothing operator 
\begin{align}\label{smoothing}
  S_\e (g^\e)(x)
  =\e^{- d}\int_{\mathbb{R}^d}
  g(z, x/\e)\varphi  ((x-z)/\e) dz.
\end{align}

%Let
%\begin{equation}\label{w}
%w_\e
%=u_\e -u_0
%-\e S_\e (\eta_\e  \chi^\e \nabla u_0 ),
%\end{equation}
%where $\chi^\e (x)=\chi(x, x/\e)$, and
%   $\eta_\e  \in C_0^\infty(\Omega)$ in (\ref{w}) is the cut-off function such that
% $0\le \eta_\e\le 1, |\nabla \eta_\e| \le C \e^{-1}$ and
%$$
%\aligned
%& \eta_\e (x)=1\quad \text{  if  } x\in \Omega \text{  and dist} (x,\partial\Omega) \ge  4\e,\\
%& \eta_\e  (x)=0 \quad \text{  if dist} (x, \partial\Omega)\le 3\e.
%\endaligned
%$$

The following theorem  provides  the convergence rate in $H^1(\Omega)$ for the locally periodic operator. The proof may be found in \cite[Theorem 3.1 and Remark 3.1]{nsxjfa2020}.
\begin{theorem} \label{th-app-1}
Let $\Omega$ be a bounded Lipschitz domain in $\mathbb{R}^d$. Let $u_\e$ be the solution to $ -\txtdiv (A(x,x/\e) \na  u_\e) =F$ in $\Omega
$ with $u_\e =g $ on $ \partial\Omega,$ and $u_0$ the solution to  $ -\txtdiv (\widehat{A}(x)  \na  u_0) =F$ in $\Omega
$ with $u_0 =g $ on $ \partial\Omega.$ Then we have
\begin{align} \label{th-app-1-re}
\begin{split}
 &\| u_\e - u_0 \|_{L^2(\Omega)} + \|  \na  u_\e-\na  u_0- S_\e((\na _y \chi )^\e \na  u_0)\|_{L^2(\Omega)}\\
   &\leq C \e \big\{ (1+
 \|\na _x A\|_\infty)   \|\na  u_0\|_{L^2(\Omega)}
+ \|\na ^2 u_0\|_{L^2(\Omega\setminus \Omega_{3\e} )} \big\}
+ C \| \na  u_0\|_{L^2(\Omega_{4\e})},
\end{split}
\end{align}
where we have extended $u_0$ to the whole space $\R^d,$  $\Omega_t =\big\{ x\in \Omega:  \text{\rm dist}(x, \partial\Omega)< t \big\}$, and $C$ depends only on $d$, $\Lm$ and $\Omega$.
\end{theorem}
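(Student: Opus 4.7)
The plan is to prove Theorem \ref{th-app-1} by the standard energy method applied to a smoothed two-scale ansatz with a boundary cutoff, carefully tracking the locally periodic dependence so that every $x$-derivative that falls on $A$ contributes the factor $\|\na_x A\|_\infty$.

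First, I would fix a cutoff $\eta_\e \in C_c^\infty(\Om)$ with $\eta_\e \equiv 1$ on $\Om \setminus \Om_{4\e}$, $\eta_\e \equiv 0$ on $\Om_{2\e}$, and $|\na \eta_\e| \le C/\e$, and introduce the two-scale corrector ansatz
\begin{equation*}
    w_\e(x) := u_0(x) + \e\, \eta_\e(x)\, \chi_j(x, x/\e)\, S_\e(\pa_j u_0)(x),
\end{equation*}
summed over $j = 1, \dots, d$. Since $\eta_\e$ vanishes on $\Om_{2\e}$, one has $w_\e = u_0 = g$ on $\pa\Om$, so $v_\e := u_\e - w_\e \in H_0^1(\Om)$. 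The aim is to write $-\txtdiv(A(\cdot,\cdot/\e)\, \na w_\e) = F + \txtdiv(G_\e) + H_\e$ with $\|G_\e\|_{L^2(\Om)}$ and $\|H_\e\|_{H^{-1}(\Om)}$ bounded by the right-hand side of \eqref{th-app-1-re}, then close via the energy inequality for $v_\e$ against the ellipticity of $A^\e$.

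To produce the necessary cancellations I would introduce a locally periodic flux corrector by setting
\begin{equation*}
    b_{ij}(x, y) := A_{ij}(x, y) + A_{ik}(x, y)\, \pa_{y_k} \chi_j(x, y) - \wh{A}_{ij}(x),
\end{equation*}
which by \eqref{ahat} has mean zero in $y$ over $\mathbb{T}^d$ for each $x$, and by the cell equation \eqref{cell-1} is divergence free in $y$. These two properties produce a skew-symmetric potential $\phi_{kij}(x,y)$, periodic with mean zero in $y$, satisfying $b_{ij} = \pa_{y_k}\phi_{kij}$ and $\phi_{kij} = -\phi_{ikj}$, and inheriting $\|\phi(x,\cdot)\|_{H^1(\mathbb{T}^d)} \le C$ together with $\|\na_x\phi\|_{L^\infty(\R^d_x;L^2(\mathbb{T}^d))} \le C(1 + \|\na_x A\|_\infty)$ from \eqref{es-chi} via the $x$-differentiated cell problem. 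Substituting the ansatz into $-\txtdiv(A^\e \na w_\e)$ and using $b_{ij} = \pa_{y_k}\phi_{kij}$ converts the nominally $O(\e^{-1})$ contribution into an honest $x$-divergence that cancels against $-\txtdiv(\wh{A}\na u_0) = F$. What remains splits into three types: boundary-strip terms supported where $\na\eta_\e\neq 0$, bounded by $C\|\na u_0\|_{L^2(\Om_{4\e})}$ using the $L^\infty$-in-$x$ bounds on $\chi$ and $\phi$ together with the $L^2$ stability of $S_\e$; $x$-derivative terms on $\chi$ or $\phi$, each carrying a factor of $\e$ and controlled by $C\e(1 + \|\na_x A\|_\infty)\|\na u_0\|_{L^2(\Om)}$; and interior smoothing errors $S_\e(\pa_j u_0) - \pa_j u_0$, bounded in $L^2(\Om\setminus\Om_{3\e})$ by $C\e\|\na^2 u_0\|_{L^2(\Om\setminus\Om_{3\e})}$ via the standard mollifier estimate together with interior $H^2$ regularity for $u_0$.

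Finally, the left-hand side of \eqref{th-app-1-re} differs from $\na v_\e$ only by remainders of the same three flavors: a $\na\eta_\e$ term localized in $\Om_{4\e}$, an $\e$-small term from $\na_x\chi$ scaled by $\|\na_x A\|_\infty$, and the commutator between $S_\e$ and pointwise multiplication by $(\na_y\chi)^\e$, each of which is already absorbed in the right-hand side. The main obstacle, in my view, is the locally periodic nature of $\chi$ and $\phi$: every $x$-derivative on the correctors must be controlled via the $x$-differentiated cell problem rather than by a direct computation, which is precisely where $\|\na_x A\|_\infty$ enters; the smoothing operator $S_\e$ is inserted in the ansatz exactly to move $x$-derivatives off the merely $L^2$ function $\na u_0$ and onto the smoother corrector, so that no extra power of $\e^{-1}$ is generated.
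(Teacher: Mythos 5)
The paper does not present its own proof of Theorem \ref{th-app-1}; it defers to \cite{nsxjfa2020}. Your argument is the standard energy-method proof for locally periodic homogenization via a cutoff two-scale ansatz with a smoothed slow gradient, a flux corrector $\phi_{kij}$ built from the mean-zero, $y$-divergence-free tensor $b_{ij}$, and $L^\infty_x L^2(\mathbb{T}^d)$ bounds on $\chi$, $\phi$, $\na_x\chi$, $\na_x\phi$ coming from the cell problem and its $x$-differentiation (consistent with \eqref{es-chi}, and requiring no $y$-smoothness of $A$). This is precisely the approach used in the cited reference, and your accounting of the three error types (boundary-strip terms from $\na\eta_\e$, $\e$-weighted $\na_x$-corrector terms carrying $\|\na_x A\|_\infty$, and mollification errors carrying $\na^2 u_0$ away from the boundary, plus the final commutator between $S_\e$ and multiplication by $(\na_y\chi)^\e$) is correct; so the proposal is right and takes essentially the same route as the paper's source.
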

% \begin{proof}
 % The proof may be found in \cite{nsxjfa2020}.
% \end{proof}

%\begin{theorem}\label{large-lip-1}
%Suppose that $A$ satisfies conditions  \eqref{ellipticity},
%\eqref{periodicity} and  \eqref{Calpha.x}.
%For $B_R=B(x_0, R)$ with $0<\e  < R\le 1$,
%let  $u_\e$ be a weak solution of $ -\text{div} (A(x,x/\e) \na u_\e) =F$ in $B_R$,
 %where $F\in L^p(B_R)$ for some $p>d$.
 % Then for $0<\e \leq r< R$,
%\begin{align}\label{relipth}
%\bigg(\fint_{B_r} |\na u_\varepsilon|^2\bigg)^{1/2}\leq C \bigg\{  \left(\fint_{B_R}
%|  \nabla u_\varepsilon|^2\right)^{1/2} +
%R\left(\fint_{B_R} |F  |^p\right)^{1/p}\bigg\},
%\end{align}
%where $C$ depends at most on $d$, $\Lm$, $p$, $(\tau, L)$ in \eqref{Calpha.x}.
%\end{theorem}
We emphasize that in the above theorem, $A$ has no smoothness assumption in $y$.

%\jz{I suggest adding a new subsection in this section related to reiterated homogenization. Particularly, the definition of $\mathcal{X}$ and $\mathcal{A}$ used in the proof of Theorem \ref{th-app-n} }

\subsection{Reiterated correctors and effective matrix}

Consider the operator  with $n$ scales $ -\text{\rm div} ( A(x/\e_1, x/\e_2,\cdots, x/\e_n) \na )$, where $A$ satisfies the ellipticity and periodicity conditions  \eqref{ellipticity} and \eqref{periodicity}. Assume  that  $\e_n $ is separated from $\e_i$ for $1\le i\le n-1.$ By using the idea of reiterated homogenization, one  can view $(\e_i)_{1\le i\le n-1}$ as parameters and homogenize the finest scale $\e_n$ to obtain an operator of the same type but with only $n-1$ scales.

Indeed rewrite 
$$\mathcal{A}_\e (x,y):=A(x/\e_1, x/\e_2,\cdots, x/\e_{n-1},y).$$ 
It is obvious that $ \mathcal{A}_\e  (x,y) $  satisfies \eqref{ellipticity}, and is locally 1-periodic in $y$. 
Let $ \chi_\e  (x,y) $ be the corrector given by \eqref{cell-1} with $A(x,y)$ replaced by $ \mathcal{A}_\e(x,y)$, and  $\widehat{\mathcal{A}}_\e(x)$ the effective matrix of $ \mathcal{A}_\e $ given by \eqref{ahat}.
In view of the structure of $\mathcal{A}_\e(x,y)$, the corresponding corrector $\chi_\e(x,y)$ takes a form of
\begin{equation} \label{mathcalX}
    \chi_\e (x,y)=\mathcal{X} (x/\e_1, \cdots,x/\e_{n-1},  y ),
\end{equation}
where the reiterated corrector $\mathcal{X} = (\mathcal{X}_j)$ is defined by the cell problem
\begin{equation}\label{cell-n}
\left\{
\aligned
& -\text{\rm div}_y  \big( A  (y_1,\cdots, y_{n-1},   y ) \na _y  \mathcal{X}_j)
=\text{\rm div}_y \big( A  (y_1,\cdots, y_{n-1},   y ) \na _y  y^j \big) \quad \text{ in } \mathbb{T}^d,\\
& \mathcal{X}_j=\mathcal{X}_j (y_1,\cdots, y_{n-1}, y ) \text{ is 1-periodic in } y,\\
 & \int_{\mathbb{T}^d} \mathcal{X}_j  (y_1,\cdots, y_{n-1},  y )\, dy  =0,
\endaligned
\right.
\end{equation}
for $1\le j\le d$,  which is also 1-periodic in $y_1,\cdots, y_{n-1}$, due to the periodicity of $A$. Moreover,  $\widehat{\mathcal{A}}_\e(x)$ 
  takes a form of
\begin{equation}\label{formA}
     \widehat{\mathcal{A}}_\e(x)  = \widehat{\mathcal{A}}(x/\e_1, \cdots, x/\e_{n-1})
\end{equation}
with
\begin{equation}\label{def.hatA}
    \widehat{\mathcal{A}}(y_1,y_2,\cdots, y_{n-1}) = \fint_{\mathbb{T}^d}
   A(y_1, \cdots,y_{n-1}, y) \Big( I + \na _y \mathcal{X} (y_1, \cdots,y_{n-1}, y) \Big) dy.
\end{equation}
 It is obvious that $\widehat{\mathcal{A}}(y_1,y_2,\cdots, y_{n-1})$ is 1-periodic in each $y_i$ with $1\le i\le n-1$, and it is defined independent of $(\e_1,\cdots,\e_n)$. 
 Similar to the case $n=1$,  $\widehat{\mathcal{A}}$ satisfies the ellipticity condition \eqref{ellipticity}. Furthermore, similar to \eqref{shata}, if $A$ is H\"{o}lder continuous in $y_i$ for $1\le i\le n-1$,  so is $\widehat{\mathcal{A}}$. 
%Observe that the above $(\mathcal{X}, \widehat{\mathcal{A}})$ is just a special case of $(\chi, \widehat{A})$ if we rename $(y_1,\cdots, y_{n-1})$ as $x$ in the definitions.
 %The point is that $(\mathcal{X}, \widehat{\mathcal{A}})$ preserves the periodic structure in $y_i$ with $1\le i\le n-1$.

\section{Quantitative scale separation and scale reduction}
\label{sec.3}

\subsection{Scale separation}\label{sec3.1}

The key ingredient of this paper is the Dirichlet's theorem on the simultaneous Diophantine approximation. The connection between the Dirichlet's theorem and the regularity theory in homogenization is previously unknown. The only loosely related notion is the Diophantine condition imposed in the setting of quasiperiodic homogenization, as mentioned earlier. The Dirichlet's theorem appears to be a powerful tool (instead of a condition) that eventually allows us to derive the uniform Calder\'{o}n-Zygmund estimates in multiscale or quasiperiodic homogenization without any additional conditions.

Let us first recall the Dirichlet's theorem; see \cite{Sch80}.
\begin{theorem}[Dirichlet (1842)]\label{th.Dirichlet}
    Suppose that $\alpha_1,\alpha_2,\cdots,\alpha_m$ are $m$ real numbers and $Q>1$. There exist integers $q, p_1, p_2,\cdots, p_m$ such that $1\le q< Q^m$ and
    \begin{equation}
        \sup_{1\le i\le m}\Big|\alpha_i - \frac{p_i}{q}\Big| < \frac{1}{qQ }.
    \end{equation}
\end{theorem}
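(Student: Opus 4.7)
The plan is to prove this by Dirichlet's box principle (i.e., the pigeonhole argument to which Dirichlet lent his name), applied to the orbit of the vector $\vec{\alpha} = (\alpha_1,\ldots,\alpha_m)$ modulo one in the unit cube $[0,1)^m$. The underlying intuition is that multiples $k\vec{\alpha}$ wrap the torus so many times that two of them must come close together; the gap $q$ between the two indices is then forced to make every $q\alpha_i$ nearly an integer.

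First I would partition the cube $[0,1)^m$ into $N^m$ congruent half-open sub-cubes of side length $1/N$, where $N$ is a positive integer chosen so that $N \ge Q$ (for instance $N = \lceil Q\rceil$). The reason for taking half-open cubes is that any two points lying in the same sub-cube then differ by \emph{strictly} less than $1/N \le 1/Q$ in each coordinate, which is needed for the strict inequality in the conclusion.

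Second I would line up the $N^m+1$ points
\begin{equation*}
\vec{v}_k := \big(\{k\alpha_1\},\,\{k\alpha_2\},\,\ldots,\,\{k\alpha_m\}\big)\in[0,1)^m, \qquad k=0,1,\ldots,N^m,
\end{equation*}
where $\{\cdot\}$ denotes the fractional part. Since there are $N^m+1$ points distributed among only $N^m$ sub-cubes, the pigeonhole principle produces indices $0\le k_1<k_2\le N^m$ with $\vec{v}_{k_1}$ and $\vec{v}_{k_2}$ in the same sub-cube. Setting $q := k_2-k_1$ and $p_i := \lfloor k_2\alpha_i\rfloor - \lfloor k_1\alpha_i\rfloor$, the identity $q\alpha_i - p_i = \{k_2\alpha_i\}-\{k_1\alpha_i\}$ together with the diameter bound on the sub-cubes yields
\begin{equation*}
|q\alpha_i - p_i| < \frac{1}{N} \le \frac{1}{Q}, \qquad 1\le i\le m,
\end{equation*}
and dividing by $q\ge 1$ gives the desired estimate $|\alpha_i - p_i/q| < 1/(qQ)$.

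The one point requiring care will be matching the pigeonhole bound $q\le N^m$ with the strict inequality $q<Q^m$ in the statement. When $Q$ is not an integer, choosing $N = \lfloor Q\rfloor + 1$ or an integer $N$ with $Q\le N$ and arguing via the extremal configuration (in which the coinciding pair can only be $\vec{v}_0$ and $\vec{v}_{N^m}$, forcing a sharper estimate at the origin) recovers the strict bound; alternatively one may simply observe that the strict/non-strict distinction is harmless in the application, since $Q$ may be replaced by any $Q' > Q$. The main conceptual obstacle is essentially nil beyond this bookkeeping, because the heart of the argument is a one-line pigeonhole on a carefully chosen finite orbit.
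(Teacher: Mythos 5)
The paper does not prove this theorem at all; it simply states it and cites Schmidt's lecture notes \cite{Sch80}, so there is no in-paper proof to compare against. Your pigeonhole argument is the standard (and essentially Schmidt's) proof, and the heart of it is correct: $N^m+1$ points $\vec v_0,\dots,\vec v_{N^m}$ in the $N^m$ half-open sub-cubes of side $1/N$ must collide, and taking differences yields integers with $1\le q\le N^m$ and $|q\alpha_i-p_i|<1/N$ for all $i$.

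Where you waver is the endgame, and the two remedies you sketch do not actually close it. Replacing $Q$ by $Q'>Q$ goes in the wrong direction: it improves the error to $1/Q'<1/Q$ but enlarges the pigeonhole bound to $q\le\lceil Q'\rceil^m$, which is even farther from $q<Q^m$. The "extremal configuration" idea is the right instinct, but the gain is not a sharper estimate at the origin; for integer $N=Q$ the correct move is to take only the $Q^m$ points $\vec v_0,\dots,\vec v_{Q^m-1}$. If two collide one gets $1\le q\le Q^m-1<Q^m$ with $|q\alpha_i-p_i|<1/Q$; if not, every cell is hit, so some $\vec v_k$ with $k\ge1$ lies in the far corner cube $[1-1/Q,1)^m$, and then $q=k<Q^m$, $p_i=\lfloor k\alpha_i\rfloor+1$ gives $|q\alpha_i-p_i|=1-\{k\alpha_i\}\le 1/Q$. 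Note the $\le$: this is unavoidable, because the theorem as printed, with both inequalities strict, is marginally false — for $m=1$, $Q=2$, $\alpha_1=1/2$ the constraint $q<Q^m=2$ forces $q=1$, and no integer $p_1$ satisfies $|1/2-p_1|<1/2$. The classical statement has $\le 1/(qQ)$ on the error (and for non-integer $Q$ one usually relaxes to $q\le\lceil Q\rceil^m$ as well). As you correctly observe, none of this affects the application in Section~\ref{sec3.1}, where only $\gamma_i\le 1/(qQ)$ and $q\le Q^{n-1}$ are ever used; but you should not claim to derive the strict form, since it cannot be derived.
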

Roughly speaking, the Dirichlet's theorem states that any real numbers can be approximated by good rational numbers with a quantitative small error. 
%Here ``good'' means in the irreducible form the denominators of these rational numbers are well controlled.
In the rest of this subsection, we apply the Dirichlet's theorem to separate at least one scale in the coefficient matrix oscillating at multiple (possibly unseparated) scales.

Without loss of generality,  hereafter we always assume 
\begin{equation}\label{cond.eps.order}
    1\ge \e_1 \ge \e_2 \ge \cdots \ge \e_n > 0.
\end{equation}

%We say $\e_i$ and $\e_j$ is $\delta$-separated if $\e_j \ge \e_i^{1-\delta}$ or $\e_i \ge \e_j^{1-\delta}$. This condition is crucial in quantitative reiterated homogenization.

%In a scale reduction process, we will first locate the first $j$ such that $\e_j$ and $\e_{j-1}$ are not $\delta$-separated. In the following, we simply assume that $\e_n$ and $\e_{n-1}$ are not $\delta$-separated and explain how we separate the scales between $\e_n$ and all $\e_j$ with $1\le j\le n-1$.

Let $Q \gg 1$ be a large number. We emphasize that the fixed number $Q$ will be chosen later independent of $\{ \e_1, \e_2,\cdots, \e_n \}$. We say $a $ is $Q$-separated from  $b$ if $b  \ge Q a$. This scale separation condition is crucial in quantitative reiterated homogenization.

%Let $\alpha_i = \e_n/\e_i \in (0,1]$ for $i=1,2,\cdots, n-1$. 
Given $Q>1$ as above, by the Dirichlet's theorem with $\alpha_i = \e_n/\e_i \in (0,1]$ for $i=1,2,\cdots, n-1$, we can find integers $q, p_1, p_2,\cdots, p_{n-1}$ such that $1\le q < Q^{n-1}$ and
\begin{equation}\label{est.Diop.Approx}
    \sup_{1\le i\le n-1} \Big|\frac{\e_n}{\e_i} - \frac{p_i}{q}
    \Big| < \frac{1}{q Q}.
\end{equation}
Moreover, because of \eqref{cond.eps.order}, we have $0\le p_i\le p_{i+1}  \le q < Q^{n-1}$. Note that it is possible that $p_i = 0$ if $\e_i$ is large enough that $\e_n/\e_i < 1/Q$ (i.e., $\e_n$ is already $Q$-separated from $\e_i$).
%or $\e_i > qQ \e_n = q\e_n^{1-\delta} \ge \e_n^{1-\delta}$. Thus in this case $\e_n$ is $\delta$-separated from $\e_i$.

Define, for $1\le i\le n-1$,
\begin{equation}\label{gami}
    \gamma_i := \Big| \frac{\e_n}{\e_i} - \frac{p_i}{q} \Big| \quad \text{and} \quad s_i = \sgn\Big(\frac{\e_n}{\e_i} - \frac{p_i}{q}\Big).
\end{equation}
Then, we can write
\begin{equation}\label{eq.rearrange}
    \frac{1}{\e_i} = \frac{\gamma_i s_i}{\e_n} + \frac{p_i}{q} \frac{1}{\e_n}.
\end{equation}
% For convenience, we write, for any $\lambda_i \in \R$,
% \begin{equation}
%     A[\lambda_1,\cdots, \lambda_n](x) = A(\lambda_1 x, \cdots, \lambda_n x).
% \end{equation}
% By \eqref{eq.rearrange}, we have
% \begin{equation}\label{eq.rearrange-A}
%     A[\frac{1}{\e_1}, \cdots, \frac{1}{\e_n}] = A[\cdots, \frac{\gamma_i }{\e_n} + \frac{p_i}{q} \frac{1}{\e_n},\cdots, \frac{1}{\e_n}].
% \end{equation}
Now, we define
\begin{equation}\label{A-sep}
    A^\sharp (y_1, y_2,\cdots, y_n) = A(s_1 y_1 + p_1 y_n, s_2 y_2 + p_2 y_n, \cdots, s_{n-1} y_{n-1} + p_{n-1} y_n, q y_n).
\end{equation}
The key insight here is that,
since $s_i = \pm 1$ and $p_1, p_2,\cdots, p_n$ are all integers, $A^\sharp$ is 1-periodic in each $y_i$. We thereby will call the transformation from $A$ to $A^\sharp$ a technique of reperiodization. As a result of \eqref{eq.rearrange} and \eqref{A-sep},  we have
\begin{equation}\label{eq.rewrite}
    A\big(\frac{x}{\e_1}, \cdots, \frac{x}{\e_n}\big) = A^\sharp\big(\frac{\gamma_1 x}{\e_n}, \frac{\gamma_2 x}{\e_n}, \cdots, \frac{\gamma_{n-1} x}{\e_n}, \frac{x}{q\e_n}\big).
\end{equation}
In other words, the  reperiodization allows us to rewrite the original periodic matrix $A$ oscillating at scales $\{ \e_1,\e_2,\cdots, \e_n \}$ into a new periodic matrix $A^\sharp$ oscillating at different scales $\{\e_n/\gamma_1, \e_n/\gamma_2, \cdots, \e_n/\gamma_{n-1}, q\e_n  \}$. Note that if $p_i = 0$ for some $i$, then the corresponding scale for that (slower) variable does not change, i.e., $\e_n/\gamma_i = \e_i$. If $\gamma_i = 0$ for some $i$, then we just do not have the scale $\e_n/\gamma_i$ and we have less oscillating scales after reperiodization (which is even better). Without loss of generality, we may assume this does not happen.

The new scales and the new 1-periodic matrix $A^\sharp$ have the following crucial properties:
\begin{itemize}
    \item The smallest scale $q \e_n$ is $Q$-separated from  $\e_n/\gamma_i$ for each $1\le i\le n-1$.
In fact, \eqref{est.Diop.Approx} implies
\begin{equation}\label{Q-separation}
    \frac{\e_n/\gamma_i}{q\e_n} = \frac{1}{q\gamma_i} \ge Q.
\end{equation} %\frac{\e_n}{\gamma_i} \ge \frac{q\e_n}{\e_n^\delta} \ge (q \e_n)^{1-\delta}.
This is the key property for our application.

\item If $A$ is H\"{o}lder continuous in $y_i$ for $1\leq i\leq n-1$, i.e.,
 \begin{equation}\label{smoothness-n}
 \begin{aligned}
     &|A(y_1,\cdots,y_{n-1}, y_n) - A(y_1',\cdots,y_{n-1}',y_n)| \\
     & \le L \big\{|y_1'-y_1|+\cdots +|y_{n-1}'-y_{n-1}|\big\}^\tau
 \end{aligned}
	\end{equation}
 with $0<L, 0<\tau\leq 1$ for any $(y_1,\cdots,y_n), (y_1',\cdots,y_{n-1}',y_n)\in \R^{d\times n}$, 
 then $A^\sharp$ is also H\"{o}lder continuous in $y_i$ for $1\le i\le n-1$ with the same constants $(\tau, L)$. However, $A^\sharp$ may lose good regularity in $y_n$ since many periods have been compressed into a single 1-periodic cell.

\item  The construction of $A^\sharp$ relies on the choice of $Q$ as well as the ratios $\e_n/\e_i$ with $1\le i\le n-1$. In other words, $A^\sharp$ is scale-invariant. Also note that $Q$ is a dimensionless parameter.
\end{itemize}

% To see more clearly how the above argument works, we provide a simple (but already essential) example of two scales. Let $A_\e(x) = A(\frac{x}{\e}, \frac{x}{\lambda \e})$, where $\e \ll 1$ and $\lambda \in (0,1)$ is a fixed real number. In this case, $\e_1 = \e$ and $\e_2 = \lambda \e < \e_1$. By Theorem \ref{th.Dirichlet} with $m = 1$, for any $Q > 1$, we can find integers $q$ and $p$ such that $1\le q<Q$ and
% \begin{equation}\label{eq.2scaleExample}
%     \gamma:= \Big|\lambda - \frac{p}{q}\Big| < \frac{1}{qQ}.
% \end{equation}
% Let $s = \sgn(\lambda - \frac{p}{q})$. Then $\lambda = s\gamma + p/q$ and thus $\frac{1}{\e} = \frac{\lambda}{\lambda \e} = \frac{s \gamma}{\lambda \e} + \frac{p}{q \lambda \e}$. Thus
% \begin{equation}
%     A\Big(\frac{x}{\e}, \frac{x}{\lambda \e} \Big) = A\Big(\frac{sx}{\lambda \e/\gamma } + \frac{px}{q \lambda \e}, \frac{qx}{q \lambda \e} \Big) = A^\sharp\Big(\frac{x}{\lambda \e /\gamma}, \frac{x}{q\lambda \e} \Big),
% \end{equation}
% where $A^\sharp(y_1, y_2) = A(s y_1 + py_2, qy_2)$ is periodic in both $y_1$ and $y_2$. The new scales are $\lambda \e/ \gamma$ and $q \lambda \e$, which are $Q$-separated since their ratio is $(q \gamma)^{-1} > Q$ by \eqref{eq.2scaleExample}.

It is important to note that the above scale separation process can only separate the smallest scale from the rest $n-1$ larger scales, while the relationships among the rest $n-1$ larger scales cannot be determined. This allows us to perform a one-scale reduction by using the idea of  reiterated homogenization. 

To see more clearly how the above argument works, we provide a concrete example of two scales. Let $A_\e(x) = A(\frac{x}{\e}, \frac{x}{\e(1/3 + \delta)})$, where $\e \ll 1, \delta\ll 1$ (e.g., $\delta = \e^{1/2}$). Clearly, in this case the two scales $\e_1 = \e$ and $\e_2 = \e(1/3+\delta)$ are not separated since their ratio is $\alpha = \e_2/\e_1 = 1/3 + \delta$. Now, note that this ratio can be well-approximated by the rational number $1/3$ in the sense of Dirichlet's theorem for $p =1, q = 3$ and up to $Q = (3\delta)^{-1} \gg 1$. Using the previous argument, we can write
\begin{equation}
    A\Big(\frac{x}{\e}, \frac{x}{\e(1/3 + \delta)} \Big) = A^\sharp\Big(\frac{\delta x}{\e(1/3 + \delta)}, \frac{x}{\e + 3\e \delta} \Big),
\end{equation}
where $A^\sharp(y_1, y_2) = A(y_1 + y_2, 3y_2)$ is 1-periodic in both $y_1$ and $y_2$. Moreover, the new scales $\e \delta^{-1}(1/3 + \delta)$ and $\e + 3\e \delta$ are obviously well-separated as their ratio is exactly $Q = (3\delta)^{-1}$.

\subsection{Scale reduction and uniform approximation}\label{sec3.2}
In this subsection we use the scale separation technique in Section \ref{sec3.1} to reduce at least one scale.  Precisely, we shall prove that the solution of \eqref{operaotr-n} can be uniformly approximated by a solution to the equation of the same type but with at most $n-1$ oscillating scales. The following is the main theorem.

%The aim of this part is to prove the following theorem of uniform approximation.

\begin{theorem}\label{th-app}
     Suppose that $A$ satisfies the assumptions \eqref{ellipticity}, \eqref{periodicity} and \eqref{smoothness-n}. Let $u_\e$ be a weak solution to
     \begin{align}\label{th-app-eq1}
     -\txtdiv \big( A(x/\e_1,x/\e_2,\cdots, x/\e_n) \na u_\e \big)=F  \quad \text{in } B_{2r}:= B(0,2r).
     \end{align}
     Then for any $Q>1$, there exists a coefficient matrix $ A^\flat = A^\flat(y_1,\cdots, y_{n-1})$, scales $\e_k'\in (0,\infty) $ with $ k=1,\cdots,n-1$, and a weak solution $u^\flat_{\e'}$  to
     \begin{equation}\label{eq.n-1scale}
         -\txtdiv \big( A^\flat(x/\e'_1,x/\e'_2,\cdots, x/\e'_{n-1}) \na u^\flat_{\e'} \big)=F  \quad \text{in } B_r,
     \end{equation}
       such that 
       \begin{equation}
           \|\na u^\flat_{\e'} \|_{L^2(B_r)}  \leq  C \big\{ \|\na u_\e \|_{L^2(B_{2r})} + r\|F\|_{L^2(B_{2r})} \big\},
       \end{equation}
       and
    \begin{equation}\label{th-app-re}
        \| \na u_\e - \na u^\flat_{\e'} -U_{\e'}\|_{L^2(B_r)} \le C  \bigg\{ \bigg(\frac{Q^{n-1}  \e_n}{ r}\bigg) ^\sigma+ Q^{-\tau} \bigg\}  \big\{\| \na u_\e \|_{L^2(B_{2r})} +r\|F\|_{L^2(B_{2r})} \big\}
    \end{equation} for some $\sigma>0,$
    where $ U_{\e'} $ is given by
    \begin{equation} \label{Ue}
        U_{\e'}(x) = (\e_n')^{- d}\int_{\mathbb{R}^d}
        \varphi  \bigg(\frac{x-z}{ \e_n'}\bigg)  (\na_y\mathcal{X}^\sharp) \bigg(\frac{z}{\e_1'},\cdots,\frac{z}{\e'_{n-1}}, \frac{x}{\e_n'}\bigg)  \na u^\flat_{\e'}(z)  dz,
    \end{equation}
    $\mathcal{X}^\sharp$ is the corrector given by \eqref{cell-n} with $A$ replaced by $A^\sharp$, and $\e'_n$ is given by \eqref{ei'}.
    Moreover, the constant $C$ depends only on $d, n, \Lm$ and  $(\tau,L)$ in \eqref{smoothness-n}, and the assumptions \eqref{ellipticity}, \eqref{periodicity} and \eqref{smoothness-n} are preserved for $A^\flat$. %\jz{Explain $U_\e$ in the theorem}
\end{theorem}

\begin{remark}\label{rmk.rate}
    Note that in the above theorem, we only state the approximation error for the gradient $\nabla u_\e$. Actually, we also have the same error bound for $\| u_\e - u_{\e'}^\flat \|_{L^2(B_r)}$. In particular, if we pick $Q = (r/\e_n)^{\theta/(n-1)}$ for some $\theta\in (0,1)$, then we have for some $\gamma \in (0,1)$,
    \begin{equation}
        \| u_\e - u_{\e'}^\flat \|_{L^2(B_r)} \le Cr \Big( \frac{\e_n}{r} \Big)^\gamma  \big\{\| \na u_\e \|_{L^2(B_{2r})} +r\|F\|_{L^2(B_{2r})} \big\}.   
    \end{equation}
    This error estimate shows that, at any scale $r\gg \e_n$, the solution $u_\e$ of the problem \eqref{th-app-eq1} with $n$ oscillating scales can be well approximated by the solution of \eqref{eq.n-1scale} with (at most) $n-1$ oscillating scales. Moreover, in general the coefficient matrix $A^\flat$ (depending on $Q$) depends on $A$ and $r/\e_n$. In other words, the approximate equation \eqref{eq.n-1scale} may vary according to different radii $r$, which is quite different from the classical homogenization theory in which the homogenized equation is fixed (independent of $r$). This is essentially due to the absence of scale-separation condition between $\e_n$ and the rest of the scales. 
\end{remark}

\begin{remark}
    One may naturally ask if we can repeat Theorem \ref{th-app} to get the next approximation with at most $n-2$ oscillating scales for the equation \eqref{eq.n-1scale}. This is unfortunately impossible in general, because in the worst case all the scales $\e_1', \e_2',\cdots, \e_{n-1}'$ will not be small and we are not able to get a small error in the next step.
\end{remark}

The proof of Theorem \ref{th-app} relies on the scale separation technique in Section \ref{sec3.1} and the following approximation theorem that pertains to the quantitative reiterated homogenization.
\begin{theorem}\label{th-app-n}
     Suppose $A$ satisfies the assumption \eqref{ellipticity}, \eqref{periodicity}, and \eqref{smoothness-n}.
       Assume  $\e_{n}$ is $Q$-separated from $\e_k, k=1,\cdots,n-1$. Let $u_\e$ be a weak solution to  
       \begin{equation}\label{eq.Qsepa}
       -\txtdiv \big( A(x/\e_1,x/\e_2,\cdots, x/\e_n) \na u_\e \big)=F  \quad\text{ in } B_2.
       \end{equation}
       Then there exist a 1-periodic matrix $\widehat{\mathcal{A}} = \widehat{\mathcal{A}}(y_1,\cdots, y_{n-1})$ with $n-1$ scales and a weak solution $\widehat{u}_\e$ to 
       $$-\txtdiv \big( \widehat{\mathcal{A}}(x/\e_1,x/\e_2,\cdots, x/\e_{n-1}) \na \widehat{u}_\e \big)=F  \quad\text{ in } B_1,$$ 
    such that 
       \begin{equation}\label{hatu.energy}
           \|\na \widehat{u}_\e \|_{L^2(B_1)}  \leq  C \big\{ \|\na u_\e \|_{L^2(B_2)} + \|F\|_{L^2(B_{2})} \big\},
       \end{equation}
       and
    \begin{equation}\label{th-app-n-re1}
        \| \na u_\e - \na \widehat{u}_\e -S_{\e_n}( (\na_y \mathcal{X})^{\e} \na \widehat{u}_\e)\|_{L^2(B_1)} \le C  ( Q^{-\tau} + \e_n^\sigma) \big\{\| \na u_\e \|_{L^2(B_2)} + \|F \|_{L^2(B_2)}  \big\},
    \end{equation} 
    for some $\sigma>0$, where $\mathcal{X}$ and $\widehat{\mathcal{A}}$ are defined by \eqref{cell-n} and \eqref{def.hatA}, respectively,
    %with $\chi=\chi(x/\e_1,\cdots,x/\e_{n-1},y)$ given by \eqref{cell-1} $(\na_y\chi)^\e(x) =\na_y\chi_\e(x/\e_1,\cdots,x/\e_{n-1}, x/\e_n) $. 
    and 
    \begin{equation}
        S_{\e_n}( (\na_y \mathcal{X})^{\e} \na \widehat{u}_\e)(x) = (\e_n)^{- d}\int_{\mathbb{R}^d}
        \varphi  \bigg(\frac{x-z}{ \e_n}\bigg)  (\na_y \mathcal{X}) \bigg(\frac{z}{\e_1},\cdots,\frac{z}{\e_{n-1}}, \frac{x}{\e_n}\bigg)  \na \widehat{u}_{\e}(z)  dz.
    \end{equation}   
    Moreover, the constant $C$ depends only on $d, n, \Lm$ and $(\tau,L)$ in  \eqref{smoothness-n}.
    %Moreover,
%    \begin{align} \label{th-app-n-re2}
%    \|\na u_\e^{\widehat{A}} \|_{L^2(B_1)}  \leq  \|\na u_\e \|_{L^2(B_2)}. \end{align}
    %\jz{$S_{\e_n}( (\na_y\chi)^\e \na \widehat{u}_\e)$ is still confusing, so I add its expression.} 
\end{theorem}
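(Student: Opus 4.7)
The plan is to regard \eqref{eq.Qsepa} as a \emph{locally periodic} equation in which $\e_n$ is the sole fast scale and $\mathcal{A}_\e(x,y) := A(x/\e_1,\ldots,x/\e_{n-1},y)$ is the slow-variable coefficient; this $\mathcal{A}_\e$ is $1$-periodic in $y$ and $\tau$-H\"older in $x$ with H\"older constant $\lesssim L\e_{n-1}^{-\tau}$. Section~\ref{sec.2.1} identifies the effective matrix of such an operator as $\widehat{\mathcal{A}}_\e(x) = \widehat{\mathcal{A}}(x/\e_1,\ldots,x/\e_{n-1})$ with $\widehat{\mathcal{A}}$ given by \eqref{def.hatA} and corrector $\mathcal{X}$ given by \eqref{cell-n}. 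I would introduce $\wh u_\e$ as the weak solution of $-\txtdiv(\widehat{\mathcal{A}}_\e\na \wh u_\e)=F$ on $B_{3/2}$ with $\wh u_\e = u_\e$ on $\pa B_{3/2}$; the standard Lax--Milgram energy argument based on uniform ellipticity immediately yields \eqref{hatu.energy}.

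The real obstruction is that Theorem~\ref{th-app-1} requires $\|\na_x \mathcal{A}\|_\infty<\infty$, whereas our $\mathcal{A}_\e$ is only H\"older in $x$. To overcome this, I would mollify only in the slow variable and set $\wt{\mathcal{A}}_\e(x,y) := S_{\e_n}\mathcal{A}_\e(\cdot,y)(x)$. The $Q$-separation $\e_i \ge Q\e_n$ for $i\le n-1$ forces the $x$-oscillation of $\mathcal{A}_\e$ over any ball of radius $\e_n$ to be $\lesssim L(\e_n/\e_{n-1})^\tau \le LQ^{-\tau}$, which produces simultaneously
\begin{equation*}
    \|\mathcal{A}_\e-\wt{\mathcal{A}}_\e\|_\infty \lesssim LQ^{-\tau}\quad\text{and}\quad \|\na_x\wt{\mathcal{A}}_\e\|_\infty \lesssim L\e_n^{-1}Q^{-\tau}.
\end{equation*}
Let $\wt u_\e$ solve the equation with coefficient $\wt{\mathcal{A}}_\e(x,x/\e_n)$ on $B_{3/2}$ with $\wt u_\e = u_\e$ on $\pa B_{3/2}$, and let $\wh{\wt u}_\e$ solve the corresponding effective equation with matrix $\wh{\wt{\mathcal{A}}}_\e$. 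A standard $L^2$-difference argument based on uniform ellipticity gives $\|\na(u_\e-\wt u_\e)\|_{L^2(B_{3/2})}\lesssim Q^{-\tau}(\|\na u_\e\|_{L^2(B_2)} + \|F\|_{L^2(B_2)})$, and natural perturbation estimates for the cell problem \eqref{cell-n} produce $\|\wh{\wt{\mathcal{A}}}_\e - \widehat{\mathcal{A}}_\e\|_\infty + \|\na_y(\wt\chi - \mathcal{X})\|_{L^2(\mathbb{T}^d)} \lesssim Q^{-\tau}$, so one may pass between $(\wh u_\e,\mathcal{X})$ and $(\wh{\wt u}_\e,\wt\chi)$ in \eqref{th-app-n-re1} at the price of $Q^{-\tau}$.

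Now Theorem~\ref{th-app-1} applies directly to the smoothed equation for $\wt u_\e$ with fast scale $\e_n$; since $\e_n\|\na_x\wt{\mathcal{A}}_\e\|_\infty \lesssim Q^{-\tau}$, its main convergence-rate contribution is $\lesssim (\e_n + Q^{-\tau})\|\na \wh{\wt u}_\e\|_{L^2(B_{3/2})}$. The boundary-layer remainder $\|\na \wh{\wt u}_\e\|_{L^2((B_{3/2})_{4\e_n})}$ produced by that theorem is handled by Meyers higher integrability: the uniform ellipticity of $\widehat{\mathcal{A}}$ on $\mathbb{T}^d$ yields an interior bound $\|\na \wh u_\e\|_{L^p(B_{5/4})} \le C(\|\na \wh u_\e\|_{L^2(B_{3/2})} + \|F\|_{L^2(B_{3/2})})$ for some $p>2$ depending only on $d,\Lm$; H\"older's inequality combined with $|(B_{3/2})_{4\e_n}|\lesssim \e_n$ then bounds the boundary layer by $C\e_n^\sigma$ with $\sigma := 1/2-1/p>0$. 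Restricting to $B_1 \subset B_{3/2}$ and collecting the three error contributions yields \eqref{th-app-n-re1}.

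The hard part is the mollification step: the large slow-variable H\"older constant $\e_{n-1}^{-\tau}$ of $\mathcal{A}_\e$ would be fatal without a mechanism to pay for it, and the $Q$-separation---the output of the Dirichlet reperiodization of Section~\ref{sec3.1}---is exactly what turns the $\e_n$-scale oscillation into a uniformly small quantity of order $Q^{-\tau}$. One must then patiently propagate this $Q^{-\tau}$ tolerance through the cell problem \eqref{cell-n}, the effective matrix \eqref{def.hatA}, and the solution operator, verifying at each stage that ellipticity and the two-scale expansion structure survive the smoothing.
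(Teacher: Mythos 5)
Your proposal reproduces the paper's argument essentially step for step: mollify $\mathcal{A}_\e(x,y)=A(x/\e_1,\dots,x/\e_{n-1},y)$ in the slow variable at scale $\e_n$, use the $Q$-separation to turn the resulting $O(\e_n^\tau\sum_k\e_k^{-\tau})$ mollification and $\e_n\|\na_x\cdot\|_\infty$ errors into $O(Q^{-\tau})$, apply Theorem~\ref{th-app-1} to the smoothed problem, absorb the boundary-layer term via Meyers' estimate (producing the $\e_n^\sigma$ with $\sigma$ a Meyers exponent deficit), and finally perturb the corrector and effective matrix back to the unmollified ones at an additional $O(Q^{-\tau})$ cost. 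The only differences from the paper's proof are cosmetic (a slightly different choice of boundary data and nesting of balls, and you leave the $\e_n^2\|\na^2\wh{\wt u}_\e\|_{L^2}^2$ term from Theorem~\ref{th-app-1} implicit, which the paper handles by the interior $H^2$ estimate plus the same Meyers bound).
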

We first prove Theorem \ref{th-app}, assuming Theorem  \ref{th-app-n}.
\begin{proof}[Proof of Theorem \ref{th-app}]
We first consider the case $r=1$. Given $ 1\ge \e_1 \ge \e_2 \ge \cdots \ge \e_n > 0$ and $Q>1$, by the Dirichlet's theorem, we can find integers $q, p_1, p_2,\cdots, p_{n-1}$ such that $1\le q < Q^{n-1}$, and \eqref{est.Diop.Approx}-\eqref{Q-separation} hold. Set 
\begin{equation}\label{ei'}
    \begin{aligned}
        \e_i' & = \e_i/\gamma_i, \quad \text{for } i=1,2,\cdots, n-1, \\
        \e_n' &= q\e_n.
    \end{aligned}
\end{equation}
By  \eqref{eq.rewrite} and \eqref{th-app-eq1}, we know that $u_\e$ satisfies
 \begin{align}\label{p-thw1p-3}
     -\txtdiv \big( A^\sharp(x/\e'_1, \cdots,   x/\e'_n) \na u_\e \big)= F \,\text{ in }  B_{2}.
     \end{align}
By \eqref{Q-separation}, $\e_n'$ is $Q$-separated from $\e_i'$ for all $i=1,2,\cdots, n-1$. Thus, by Theorem \ref{th-app-n}, there exist  $\widehat{\mathcal{A}}^\sharp = \widehat{\mathcal{A}}^\sharp(y_1,\cdots, y_{n-1})$ given by \eqref{def.hatA} with $A$ replaced by $A^\sharp$, and a weak solution $\widehat{u}_{\e'}$ to 
\begin{equation}
    -\txtdiv \big( \widehat{\mathcal{A}}^\sharp (x/\e'_1, \cdots,   x/\e'_{n-1}) \na \widehat{u}_{\e'} \big)= F  \quad \text{in } B_{1},
\end{equation}
% Moreover, let $\ga_i $ be given by \eqref{gami}. We know that $\e_n'=q\e_n$ are $Q$-separated from  $\e_i'=\e_n/\ga_i, i=1,\cdots, n-1$.
%    Let $\de_i=\e_i/r, i=1,\cdots, n$,
%  $\de_n'= q\de_n$ and  $\de_i'=\de_n/\ga_i, i=1,\cdots, n-1.$ It is obvious that $\de_n'$  are  $Q$-separated from $\de_i'=\de_n/\ga_i, i=1,\cdots, n-1$.
%  Let $v_\e(x)=u_\e(rx)$, in view of \eqref{th-app-eq1} we have
%  $$-\txtdiv \big( A(x/\de_1, \cdots, x/\de_n) \na v_\e \big)=0\,\text{ in }  B_{2} .$$
  %$$-\txtdiv \big( A(x/\de_1, \cdots, x/\de_n) \na v_\e \big)=0\,\text{ in }  B_{2} \cap \Omega_r, \quad  u_\e=0 \,\text{ on }  B_{2} \cap \pa\Omega_r,$$
 %   Let $A^\sharp=A^\sharp(y_1,\cdots,y_n)$ be given by \eqref{A-sep}. It follows that
 % \begin{align}\label{p-thw1p-3}
 %     -\txtdiv \big( A^\sharp(x/\de'_1, \cdots,   x/\de'_n) \na v_\e \big)=0\,\text{ in }  B_{2} .
 %     \end{align}
 %   Recall that $A^\sharp $ still satisfies \eqref{ellipticity}, \eqref{periodicity}, and \eqref{smoothness-n}. We now apply Theorem \ref{th-app-n} to the operator $-\txtdiv \big( A^\sharp(x/\de'_1,  \cdots,  x/\de'_n) \na)$. Let
 %  $\pi=\pi(x/\de'_1,\cdots,x/\de'_{n-1},y)$ be given by \eqref{cell-1} with $A$ replaced by $A^\sharp$, and accordingly $\widehat{A}^\sharp$ given by \eqref{effective-A} with $A$ and $\chi$ replaced by $A^\sharp$ and $ \pi$ respectively. It follows that there exists a solution $ v^{\widehat{A^\sharp}}_\e $ to
 %   $$ -\txtdiv \big( \widehat{A^\sharp}(x/\de'_1, \cdots,   x/\de'_{n-1}) \na v^{\widehat{A^\sharp}}_\e \big)=0  \quad \text{in } B_{1},$$
  such that  
  \begin{equation}
      \| \na \widehat{u}_{\e'} \|_{L^2(B_{1})}\leq C \{ \| \na u_\e \|_{L^2(B_2)} +\| F \|_{L^2(B_2)}\},
  \end{equation}
  and
  \begin{equation}
  \begin{aligned}
        &\| \na u_\e - \na \widehat{u}_{\e'} -S_{\e_n'}((\na_y\mathcal{X}^\sharp )^{\e'} \na \widehat{u}_{\e'} )\|_{L^2(B_1)} \\
        &\le C  (  Q^{-\tau}  + (\e'_n)^\sigma)   \{ \| \na u_\e \|_{L^2(B_2)} +\| F \|_{L^2(B_2)}\}  \\
        &\le C  (  Q^{-\tau}  + (Q^{n-1}\e_n)^\sigma)   \{ \| \na u_\e \|_{L^2(B_2)} +\| F \|_{L^2(B_2)}\},
    \end{aligned}
    \end{equation} 
    for some $\sigma>0$,  where $\mathcal{X}^\sharp$ is the corrector defined in \eqref{cell-n} with $A$ replaced by $A^\sharp$, and
    \begin{align*}
    \begin{split}
     S_{\e_n'}((\na_y\mathcal{X}^\sharp)^{\e'} \na \widehat{u}_{\e'} )
    &=(\e_n')^{- d}\int_{\mathbb{R}^d}
   \varphi  \bigg(\frac{x-z}{ \e_n'}\bigg)  (\na_y \mathcal{X}^\sharp) \left(\frac{z}{\e_1'},\cdots,\frac{z}{\e'_{n-1}}, \frac{x}{\e_n'}\right)  \na \widehat{u}_{\e'}(z)  dz.
   \end{split}
    \end{align*} 
    We rename $A^\flat = \widehat{\mathcal{A}}^\sharp, u^\flat_{\e'} = \widehat{u}_{\e'}$, $U_{\e'} = S_{\e_n'}((\na_y \mathcal{X}^\sharp)^{\e'} \na \widehat{u}_{\e'} )$, and derive the desired estimate in the case $r=1$.
    {Finally, note that $A^\sharp$ and $A^\flat$ are scale-invariant. The estimate for general $r>0$ follows  immediately by rescaling. The proof is complete.}

    \end{proof}
%     Recall that a rescaling in the variable $x$ does not change the matrix $A^\sharp$.
% We therefore obtain by rescaling that,
%    \begin{align} \label{p-thw1p-5}
%    \begin{split}
%         &\| \na u_\e - \na u_\e^{\widehat{A^\sharp}} -U_\e\|_{L^2(B_r)}
%        % &\le C  (  Q^{-\tau}  + (\de'_n)^\sigma)  \| \na u_\e \|_{L^2(B_{2r})}  \\
%          \le C  \left\{  Q^{-\tau}+\left(\frac{Q^{n-1}  \e_n}{ r}\right) ^\sigma \right\}    \| \na u_\e \|_{L^2(B_{2r})},
%                   \end{split}
%     \end{align}
%      and
%      \begin{align}\label{p-thw1p-5'}
%      \| \na u_\e^{\widehat{A^\sharp}}\|_{L^2(B_{6r/5})}\leq C  \| \na u_\e \|_{L^2(B_{2r})},
%      \end{align}
%       where $  u_\e^{\widehat{A^\sharp}}(x)= v_\e^{\widehat{A^\sharp}}(x/r)$ is the solution to
%      \begin{align} \label{p-thw1p-6}
%      -\txtdiv \big( \widehat{A^\sharp}(x/\e'_1, \cdots,   x/\e'_{n-1}) \na u_{\e'}^\flat \big)=0  \quad \text{in } B_{r},
%      \end{align}
%     and \begin{align}\label{Ue}
%     \begin{split}
%      U_\e=(\e_n')^{- d}\int_{\mathbb{R}^d}
%    \varphi  \left(\frac{x-z}{ \e_n'}\right)  (\na_y\pi) \left(\frac{z}{\e_1'},\cdots,\frac{z}{\e'_{n-1}}, \frac{x}{\e_n'}\right)  \na u_\e^{\widehat{A^\sharp}}(z)  dz.
%    \end{split}
%     \end{align}
% This gives \eqref{th-app-re} with $B=\widehat{A^\sharp}$. \jz{ Finally, by the definition of $\widehat{A^\sharp}$ and \eqref{shata}, we see that $\widehat{A^\sharp}(y_1,\cdots,y_{n-1})$ is periodic and H\"{o}lder continuous in $y_k$ for $1\le k\le n-1$.}

We now provide the proof of Theorem \ref{th-app-n}, following the idea of \cite{nsxjfa2020}.

\begin{proof}[Proof of Theorem \ref{th-app-n}]
We consider the matrix 
\begin{equation}\label{calA}
\mathcal{A}_\e (x,y):=A(x/\e_1, x/\e_2,\cdots, x/\e_{n-1},y).
\end{equation} 
{Recall that  $\mathcal{A}_\e (x,y)$ is strongly elliptic and locally 1-periodic in $y$. }
%It is obvious that $ \mathcal{A}_\e  (x,y) $  satisfies the ellipticity condition \eqref{ellipticity}. In the following proof, $\mathcal{A}_\e(x,y)$ will be thought as locally 1-periodic in $y$. 
In view of \eqref{smoothness},  $\mathcal{A}_\e(x,y)$ is  H\"{o}lder continuous in $x$, and 
\begin{align}\label{ma-holder}
 | \mathcal{A}_\e  (x',y)- \mathcal{A}_\e  (x,y)| \leq L\sum_{k=1}^{n-1}\e_{k}^{-\tau} |x-x'|^\tau.
\end{align}
In order to apply Theorem \ref{th-app-1}, we need to find an approximate matrix $ \mathcal{A}^{\rm app}_\e  = \mathcal{A}^{\rm app}_\e (x, y)$ which is Lipschitz in the $x$ variable. In fact, we define
\begin{equation}
    \mathcal{A}^{\rm app}_\e (x, y) = \e_n^{-d} \int_{\R^d} \varphi\Big( \frac{x-z}{\e_n} \Big) \mathcal{A}_\e(z,y) dz,
\end{equation}
with $\varphi$ given as in Section \ref{sec.2.1}.
Then  
$ \mathcal{A}^{\rm app}_\e $ satisfies the ellipticity condition \eqref{ellipticity}, is 1-periodic in $y$, and
\begin{equation}\label{masmothing-holder}
\|  \mathcal{A}_\e  - \mathcal{A}_\e^{\rm app}  \|_\infty
\le C L \sum_{k=1}^{n-1}\e_{k}^{-\tau} \e_{n} ^\tau
\quad
\text{ and } \quad
\|\nabla_x  \mathcal{A}_\e^{\rm app}  \|_\infty
\le C L \sum_{k=1}^{n-1}\e_{k}^{-\tau} \e_{n} ^{\tau-1},
\end{equation}
where  $C$ depends only on $d$ and $\tau$. 
%Note that $\mathcal{A}_\e(x,y)$ and $\mathcal{A}_\e^{\rm app}(x,y)$ are generally not periodic in $x$ variable.

Let $u_\e$ be the weak solution to \eqref{eq.Qsepa}. Then by \eqref{calA}, it satisfies
\begin{equation}\label{ueinb2}
-\txtdiv \big( \mathcal{A}_\e  (x,  x/\e_n)\nabla u_\e \big)  =F \quad
\text{ in } B_{2}.
\end{equation}
Let $u_\e^{\rm app}$ be the weak solution to
\begin{equation}\label{eqve}
-\text{\rm div} \big(  \mathcal{A}^{\rm app}_\e  (x, x/\e_n)\nabla u_\e^{\rm app}\big) =F \quad
\text{in } B_{3/2}, \quad \text{and} \quad
u_\e^{\rm app} =u_\e \quad \text{on } \partial B_{3/2}.
\end{equation}
Combining \eqref{ueinb2}, \eqref{eqve} and the first inequality in \eqref{masmothing-holder}, we can apply the energy estimate to the equation of $u_\e - u_\e^{\rm app}$ 
\begin{equation}
\left\{
\begin{aligned}
    & -\text{\rm div} \big(  \mathcal{A}^{\rm app}_\e  (x, x/\e_n)\nabla (u_\e^{\rm app} - u_\e) \big) = \txtdiv \big( (\mathcal{A}^{\rm app}_\e(x,x/\e_n) - \mathcal{A}_\e(x,x/\e_n)) \nabla u_\e \big)  \quad
\text{in } B_{3/2}, \\
& u_\e^{\rm app} - u_\e = 0 \quad \text{on } \partial B_{3/2},
\end{aligned} \right.
\end{equation}
to obtain
\begin{align}\label{ueve-error}
\int_{B_{3/2}}
|\nabla (u_\e -u_\e^{\rm app})|^2
&\le C L^2 \sum_{k=1}^{n-1} \e_{k}^ {-2\tau} \e_n^{2\tau}
\int_{B_{3/2}} |\nabla u_\e|^2.
\end{align}

Next, we apply Theorem \ref{th-app-1}  to the equation \eqref{eqve}.
Let $ \chi^{\rm app}_\e(x,y)$ be the corrector given by  \eqref{cell-1} with $A(x,y)$ replaced by $ \mathcal{A}^{\rm app}_\e  (x, y)$, and $ \widehat{\mathcal{A}}^{\rm app}_\e (x)$ the corresponding effective coefficient matrix given by \eqref{ahat}.
Let $\widehat{u}_\e^{\rm app}$ be the solution to 
\begin{align}\label{eqv0}
-\text{\rm div} \big(  \widehat{\mathcal{A}}^{\rm app}_\e (x) \nabla \widehat{u}_\e^{\rm app} \big)=F \quad \text{in } B_{5/4},  \quad \text{and}\quad \widehat{u}_\e^{\rm app}= u_\e^{\rm app} \quad \text{on } \pa  B_{5/4}  .\end{align}
Thanks to Theorem \ref{th-app-1},
\begin{equation}\label{vev0-error}
\aligned
& \int_{B_{5/4}}
|\na u_\e^{\rm app} - \na \widehat{u}_\e^{\rm app}- S_{\e_n}( (\na_y  \chi^{\rm app}_\e)^{\e_n} \na u_\e^{\rm app})|^2  \\
& \leq C \e_n^2  (1+
 \|\na_x\mathcal{A}_\e^{\rm app} \|_\infty)^2  \int_{B_{5/4}}
|\na \widehat{u}_\e^{\rm app}|^2
\\
& \qquad + C\e_n^2\int_{B_{(5/4)-3\e_n}}
 |\na ^2 \widehat{u}_\e^{\rm app} |^2
+ C\int_{B_{5/4}\setminus B_{(5/4)-4\e_n}}
 |\na  \widehat{u}_\e^{\rm app} |^2.
 %\\
% & \le C \big\{ \e^\sigma
%+  \e^\theta  L \big\}
%\left\{ \left(\int_{B_{3/2}}
%|v_\e|^2\right)^{1/2}
%+ \left(\int_{B_{3/2}} |F|^2 \right)^{1/2}
%\right\}\\
%& \le C \big\{ \e^\sigma
%+  \e^\theta  L \big\}
%\left\{ \left(\int_{B_{2}}
%|u_\e|^2\right)^{1/2}
%+ \left(\int_{B_{2}} |F|^2 \right)^{1/2}
%\right\},
\endaligned
\end{equation}
By the H\"older's inequality, the last term on the right-hand side of  \eqref{vev0-error}
is bounded by
\begin{align}\label{2.26}
\int_{B_{5/4}\setminus B_{(5/4)-4\e_n}}
 |\na  \widehat{u}_\e^{\rm app} |^2\leq  C \e_n ^{1-\frac{2}{q} } \bigg(\int_{B_{5/4}} |\nabla \widehat{u}_\e^{\rm app}|^q \bigg)^{2/q} \quad  \text{ for } q>2.
\end{align}
On the other hand, by the interior $H^2$ estimate for the equation \eqref{eqve}, for any $B(z,2\rho) \subset B_{3/2}$,
\begin{equation*}
\int_{B(z,\rho)}
|\nabla^2 \widehat{u}_\e^{\rm app}|^2
\le C \int_{B(z, 2\rho)}
|F|^2
+ C \big( \|\na_x\mathcal{A}^{\rm app}_\e \|_\infty^2 +\rho^{-2}
\big)
\int_{B(z, 2\rho)} |\nabla \widehat{u}_\e^{\rm app}|^2.
\end{equation*}
Then it follows by a covering argument that
\begin{equation}\label{2.27}
\aligned
& \int_{B_{(5/4) -3\e_n}}
|\nabla^2 \widehat{u}_\e^{\rm app}|^2\, dx
 %& \le C \int_{B_{5/4}}
%|F|^2\, dx
%+ C \|\nabla_x  E _0\|^2_\infty \int_{B_{5/4}} |\nabla v_0 |^2\, dx\\
%&\qquad\qquad
%+ C \int_{B_{(5/4)-\e_n}}
%\frac{ |\nabla v_0 (x) |^2\, dx }{|\text{\rm dist} (x, \partial B_{5/4})|^2 }\\
\\
&\leq  C \int_{B_{5/4}}
|F|^2
+ C \|\na_x\mathcal{A}^{\rm app}_\e  \|^2_\infty \int_{B_{5/4}} |\nabla \widehat{u}_\e^{\rm app} |^2 + \e_n ^{-\frac{2}{q}-1} \bigg(\int_{B_{5/4}} |\nabla \widehat{u}_\e^{\rm app}|^q \bigg)^{2/q},
\endaligned
\end{equation}
where the H\"{o}lder's inequality has been used for the last integral.
By Meyers' estimate for the elliptic equations \eqref{ueinb2}, \eqref{eqve} and \eqref{eqv0}, we have for some $q>2$, 
 \begin{align}\label{2.28}
 \begin{split} &\bigg(\int_{B_{3/2}} |\nabla u_\e|^q \bigg)^{2/q} \leq  C \int_{B_2}
|F|^2 + C \int_{B_{2}} |\nabla u_\e|^2,  \\
 &\bigg(\int_{B_{3/2}} |\nabla u_\e^{\rm app}|^q \bigg)^{2/q} \leq  C \int_{B_{3/2}}
|F|^2 + C\bigg(\int_{B_{3/2}} |\nabla u_\e|^q \bigg)^{2/q}, \\
 &\bigg(\int_{B_{5/4}} |\nabla \widehat{u}_\e^{\rm app}|^q \bigg)^{2/q} \leq  C \int_{B_{5/4}}
|F|^2 + C \bigg(\int_{B_{5/4}} |\nabla u_\e^{\rm app}|^q \bigg)^{2/q},\end{split}\end{align}
   which implies that
   \begin{align} \label{2.29}
 &\bigg(\int_{B_{5/4}} |\nabla \widehat{u}_\e^{\rm app}|^q \bigg)^{2/q} \leq  C \int_{B_2}
|F|^2 + C  \int_{B_{2}} |\nabla u_\e|^2 . \end{align}
By taking \eqref{2.26} and \eqref{2.27} into  \eqref{vev0-error}, and using \eqref{2.29} we obtain that
\begin{equation}\label{vev0-error-1}
\aligned
&  \int_{B_{5/4}}
|\na u_\e^{\rm app} - \na \widehat{u}_\e^{\rm app}- S_{\e_n}( (\na_y  \chi_\e^{\rm app})^{\e_n} \na \widehat{u}_\e^{\rm app})|^2  \\
& \le C \big\{ \e_n^{1-\frac{2}{q}}
+  L^2\sum_{k=1}^{n-1}\e_{k}^{-2\tau} \e_n^{2\tau} \big\}
\bigg\{  \int_{B_{2}}
|\na u_\e|^2
+  \int_{B_{2}} |F|^2
\bigg\}.
\endaligned
\end{equation}

{Now, let $ \chi_\e  (x,y) $ be the corrector given by \eqref{cell-1} with $A(x,y)$ replaced by $ \mathcal{A}_\e(x,y)$, and  $\widehat{\mathcal{A}}_\e(x)$ the effective matrix of $ \mathcal{A}_\e $ given by \eqref{ahat} (recall that $ \chi_\e  (x,y) $ and $\widehat{\mathcal{A}}_\e(x)$  take the forms of \eqref{mathcalX} and \eqref{formA}).} 
Note that $\chi_\e - \chi_\e^{\rm app} = (\chi_{\e,j} - \chi_{\e,j}^{\rm app})$ satisfies
\begin{align*}
&-\text{\rm div}_y  \big(  \mathcal{A}_\e(x,   y ) ( \na _y  \chi_{\e,j}-\na_y  \chi_{\e,j}^{\rm app} )\big)\\
&=\text{\rm div}_y \big( ( \mathcal{A}_\e (x,   y )- \mathcal{A}_\e^{\rm app}  (x, y ))\na _y  \chi^{app}_{\e,j}\big)
 +\text{\rm div}_y \big(  (\mathcal{A}_\e(x,   y )- \mathcal{A}_\e^{\rm app}  (x, y )) e_j \big) \quad \text{ in } \mathbb{T}^d.
\end{align*}
Thus, the standard energy estimate and the first inequality in  \eqref{masmothing-holder} imply that 
\begin{align}\label{chi.error}
\sup_{x\in \R^d}\|\na _y  \chi_{\e,j}(x,\cdot)-\na_y  \chi^{\rm app}_{\e,j}(x,\cdot) \|_{L^2(\mathbb{T}^d)}
\le C L \sum_{k=1}^{n-1}\e_{k}^{-\tau} \e_{n} ^\tau,
\end{align}
where $C$ depends only on $d,\Lm$ and $\tau$. 
This together with the definitions of $\widehat{\mathcal{A}}_\e^{\rm app}$ and $\widehat{ \mathcal{A}}_\e$  gives 
\begin{equation}\label{Aapp-A}
\|  \widehat{\mathcal{A}}_\e^{\rm app}   - \widehat{ \mathcal{A}}_\e \|_\infty
\le C L \sum_{k=1}^{n-1}\e_{k}^{-\tau} \e_{n} ^\tau .
\end{equation}
Let $\widehat{u}_\e$ be the weak solution to
\begin{align}\label{equ0}
-\text{\rm div} \big(\widehat{ \mathcal{A}}_\e (x)\nabla \widehat{u}_\e \big)=F \quad \text{ in }  B_{5/4}, \quad  \text{ and } \quad \widehat{u}_\e= \widehat{u}^{\rm app}_\e  \quad \text{ on } \partial B_{5/4}.
 \end{align} 
 Similar to \eqref{ueve-error}, we apply
the standard energy estimate to the equation for $\widehat{u}_\e^{\rm app} - \widehat{u}_\e$,
\begin{equation}
\left\{
\begin{aligned}
    & -\text{\rm div} \big(  \widehat{\mathcal{A}}_\e  (x, x/\e_n)\nabla (\widehat{u}_\e - \widehat{u}_\e^{\rm app}) \big) = \txtdiv \big( (\widehat{\mathcal{A}}_\e(x,x/\e_n) - \widehat{\mathcal{A}}^{\rm app}_\e(x,x/\e_n)) \nabla \widehat{u}_\e^{\rm app} \big)  \quad
\text{in } B_{5/4}, \\
& \widehat{u}_\e - \widehat{u}_\e^{\rm app} = 0 \quad \text{on } \partial B_{5/4},
\end{aligned} \right.
\end{equation}
and use \eqref{Aapp-A} to obtain
\begin{align}\label{u0v0-error}
\begin{split}
 \int_{B_{5/4}}|\nabla (\widehat{u}_\e - \widehat{u}_\e^{\rm app})|^2
&\le C L^2 \sum_{k=1}^{n-1}\e_{k}^{-2\tau} \e_{n} ^{2\tau}
 \int_{B_{5/4}} |\nabla \widehat{u}_\e^{\rm app}|^2 \\
 &\le C L^2 \sum_{k=1}^{n-1}\e_{k}^{-2\tau} \e_{n} ^{2\tau}
\bigg\{  \int_{B_2} |\na u_\e|^2  + \int_{B_2} |F|^2  \bigg\},
\end{split}
\end{align}
where the energy estimate for $\widehat{u}_\e^{\rm app}$ (or \eqref{2.29})  is also used in the last inequality. 

Finally, combining \eqref{ueve-error}, \eqref{chi.error} and \eqref{u0v0-error}, we may replace the approximate solutions $u_\e^{\rm app}, \widehat{u}_\e^{\rm app}, \chi_\e^{\rm app}$ on the left-hand side of \eqref{vev0-error-1} by $u_\e, \widehat{u}_\e, \chi_\e$, respectively. In particular,
%In view of the estimate of $ \chi^{app}$ (see \eqref{es-chi}), and 
by the property of the smoothing operator $S_{\e_n}$ (see \cite[Lemma 2.2]{nsxjfa2020}), we have
\begin{align} \label{seu0sev0-error}
\begin{split}
 &\int_{B_1}| S_{\e_n} \big((\na_y  \chi_\e )^{\e_n}  \na \widehat{u}_\e\big) -   S_{\e_n} \big( (\na_y  \chi_\e^{\rm app})^{\e_n} \na \widehat{u}_\e^{\rm app}\big)|^2\\
&\le 2\int_{B_1}\big\{| S_{\e_n} \big((\na_y  \chi_\e )^{\e_n}  (\na \widehat{u}_\e -\na \widehat{u}_\e^{\rm app} )\big)|^2 +| S_{\e_n} \big( ((\na_y  \chi_\e^{\rm app})^{\e_n}  -(\na_y  \chi_\e)^{\e_n} ) \na \widehat{u}_\e^{\rm app}\big)|^2 \big\}\\
&\le   C  \int_{B_{1+\e_n}}|  \na \widehat{u}_\e- \na \widehat{u}_\e^{\rm app}|^2  + C L^2 \sum_{k=1}^{n-1}\e_{k}^{-2\tau} \e_{n} ^{2\tau} \int_{B_{1+\e_n}}|  \na \widehat{u}_\e^{\rm app} |^2   \\
 &\le C L^2 \sum_{k=1}^{n-1}\e_{k}^{-2\tau} \e_{n} ^{2\tau}
\bigg\{  \int_{B_2} |\na u_\e|^2  + \int_{B_2} |F|^2  \bigg\}.
\end{split}
\end{align}
It follows that
\begin{equation}\label{ueu0-error}
\aligned
&  \int_{B_{5/4}}
|\na u_\e - \na \widehat{u}_\e- S_{\e_n}( (\na_y  \chi )^{\e} \na \widehat{u}_\e)|^2  \\
& \le C \big\{ \e_n^{1-\frac{2}{q}}
+  L^2\sum_{k=1}^{n-1}\e_{k}^{-2\tau} \e_n^{2\tau} \big\}
\bigg\{  \int_{B_{2}}
|\na u_\e|^2
+  \int_{B_{2}} |F|^2
\bigg\}.
\endaligned
\end{equation}
Recall that $\e_{n}$ is $Q$-separated from $\e_k, k=1,\cdots,n-1$, i.e., $\e_k/\e_{n}\geq Q$ for $k=1,\cdots, n-1.$ 
 In view of \eqref{mathcalX} and \eqref{formA},
the desired estimate \eqref{th-app-n-re1} follows immediately from \eqref{ueu0-error}, while the estimate \eqref{hatu.energy} follows directly from \eqref{u0v0-error}. %and \eqref{2.29}. The estimate \eqref{hatu.energy} follows from the energy estimates, which can be seen from \eqref{u0v0-error} and \eqref{2.28}.
The proof is complete.
\end{proof}

By similar arguments as above, it is not difficult to prove the boundary version of the uniform approximation theorem.
% \begin{theorem}\label{th-app-bd}
%      Let $\Omega$ be a bounded $C^1$ domain, and $B=B(x,r) $ with $x \in \pa\Omega$, $0<r< \text{diam}(\Omega)/100$. Suppose \eqref{cond.eps.order} holds, and $u_\e$ is a weak solution to
%      \begin{align}\label{thb-app-eq1}
%      -\txtdiv \big( A(x/\e_1,x/\e_2,\cdots, x/\e_n) \na u_\e \big)=F  \, \text{ in } B_{2r}\cap \Omega, \quad   u_\e=0\,\text{ on } B_{2r}\cap \pa \Omega,
%      \end{align} where $A$ satisfies the assumption \eqref{ellipticity}, \eqref{periodicity}, and \eqref{smoothness} with constants $(\tau,L)$.  Then for any $Q>1$ there exits a matrix $B[\frac{1}{\e'_1},\cdots, \frac{1}{\e'_{n-1}}]$ with $0< \e_k', k=1,\cdots,n-1$, and a weak solution $u^B_\e$  to
%      $$-\txtdiv \big( B(x/\e'_1,x/\e'_2,\cdots, x/\e'_{n-1}) \na u^B_\e \big)=F  \text{ in } B_{2r}\cap \Omega, \quad   u^B_\e=g \,\text{ on } B_{2r}\cap \pa \Omega,$$
%        such that $\|\na u_\e^{B} \|_{L^2(B_r\cap \Omega)}  \leq  \|\na u_\e \|_{L^2(B_{2r}\cap \Omega)}$, and
%     \begin{equation}\label{thb-app-re}
%         \| \na u_\e - \na u^B_\e -U_\e\|_{L^2(B_r\cap \Omega)} \le C  \left\{ \left(\frac{Q^{n-1}  \e_n}{ r}\right) ^\sigma+ Q^{-\tau} \right\}  \big\{\| \na u_\e \|_{L^2(B_{2r}\cap \Omega)} +\|F\|_{L^2(B_{2r}\cap \Omega)} \big\}
%     \end{equation} from some $\sigma>0,$
%     where $ U_\e $ is defined as in \eqref{Ue}, and the constant $C$ depends only on $d, n, \Lm, (\tau,L)$ in \eqref{smoothness}, and also the $C^1$ character of  $\Omega.$
%     Moreover, the periodic structure and regularity are preserved for $B$.
% \end{theorem}

\begin{theorem}\label{th-app-bd}
     Let $\Omega$ be a bounded $C^1$ domain. Suppose that $A$ satisfies the assumptions \eqref{ellipticity}, \eqref{periodicity} and \eqref{smoothness-n}. Let $x_0\in \partial \Omega, B_r = B(x_0,r)$ with $0<r<r_0 = r_0(\Omega)$ and $u_\e$ be a weak solution to
     \begin{align}\label{thb-app-eq1}
     -\txtdiv \big( A(x/\e_1,x/\e_2,\cdots, x/\e_n) \na u_\e \big)=F  \, \text{ in } B_{2r}\cap \Omega, \quad   u_\e=0\,\text{ on } B_{2r}\cap \pa \Omega.
     \end{align}
     Then for any $Q>1$, there exist a coefficient matrix $ A^\flat = A^\flat(y_1,\cdots, y_{n-1})$, scales $\e_k'\in (0,\infty) $ with $ k=1,\cdots,n-1$, and a weak solution $u^\flat_{\e'}$  to
     $$
     -\txtdiv \big( A^\flat(x/\e'_1,x/\e'_2,\cdots, x/\e'_{n-1}) \na u^\flat_{\e'} \big)=F  \quad \text{in } B_r, \quad u^\flat_{\e'} = 0 \text{ on } B_r \cap \partial\Omega,
     $$
       such that 
       \begin{equation}
           \|\na u^\flat_{\e'} \|_{L^2(B_r \cap \Omega)}  \leq  C\big\{\| \na u_\e \|_{L^2(B_{2r}\cap \Omega)} +r\|F\|_{L^2(B_{2r} \cap \Omega)} \big\},
       \end{equation}
       and
    \begin{equation}\label{thb-app-re}
        \| \na u_\e - \na u^\flat_{\e'} -U_{\e'}\|_{L^2(B_r\cap \Omega)} \le C  \bigg\{ \bigg(\frac{Q^{n-1}  \e_n}{ r}\bigg) ^\sigma+ Q^{-\tau} \bigg\}  \big\{\| \na u_\e \|_{L^2(B_{2r}\cap \Omega)} +r\|F\|_{L^2(B_{2r} \cap \Omega)} \big\}
    \end{equation} for some $\sigma>0,$
    where $ U_{\e'} $ is given by
    \begin{equation}
        U_{\e'}(x) = (\e_n')^{- d}\int_{\mathbb{R}^d}
        \varphi  \bigg(\frac{x-z}{ \e_n'}\bigg)  (\na_y\mathcal{X}^\sharp) \bigg(\frac{z}{\e_1'},\cdots,\frac{z}{\e'_{n-1}}, \frac{x}{\e_n'}\bigg)  \na u^\flat_{\e'}(z)  dz,
    \end{equation}
    and $\mathcal{X}^\sharp$ is the corrector given by \eqref{cell-1} with $A$ replaced by $A^\sharp$ (we have extended $u^\flat_{\e'}(z)$ by zero across the boundary).
    Moreover, the constant $C$ depends only on $d, n, \Lm$ and  $(\tau,L)$ in \eqref{smoothness-n}, and the assumptions \eqref{ellipticity}, \eqref{periodicity} and \eqref{smoothness} are preserved for $A^\flat$. %\jz{Explain $U_\e$ in the theorem}
\end{theorem}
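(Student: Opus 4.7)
The plan is to follow the two-step blueprint used for Theorem \ref{th-app}: first a reperiodization to isolate the finest scale, then a single-scale reduction via reiterated homogenization. The reperiodization step is purely algebraic and does not interact with $\pa\Om$: given $Q>1$, apply Dirichlet's theorem to $\alpha_i = \e_n/\e_i$ to produce $A^\sharp$ and the new scales $\e_i'$ as in \eqref{A-sep}, \eqref{ei'}, so that $u_\e$ solves $-\txtdiv(A^\sharp(x/\e_1',\ldots,x/\e_n')\na u_\e) = F$ on $B_{2r}\cap\Om$ with the unchanged Dirichlet condition on $B_{2r}\cap\pa\Om$, and with $\e_n'$ being $Q$-separated from $\e_1',\ldots,\e_{n-1}'$. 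The task then reduces to proving a boundary analog of Theorem \ref{th-app-n} on a half-ball at $x_0\in\pa\Om$.

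I would prove that boundary analog by rerunning the proof of Theorem \ref{th-app-n} with every ball $B_\rho$ replaced by $B_\rho \cap \Om$ and every auxiliary solution taken to vanish on $B_\rho \cap \pa\Om$. Define $\mathcal{A}_\e^{\rm app}$ by mollifying $\mathcal{A}_\e$ in $x$ at scale $\e_n$, so that \eqref{masmothing-holder} still holds. Introduce $u_\e^{\rm app}$ on $B_{3/2}\cap\Om$ with Dirichlet data $u_\e$ on $\pa(B_{3/2}\cap\Om)$, and $\wh u_\e^{\rm app}$ on $B_{5/4}\cap\Om$ with data $u_\e^{\rm app}$, the latter solving the locally-periodic homogenized equation with matrix $\wh{\mathcal{A}}_\e^{\rm app}$. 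The energy estimates \eqref{ueve-error}, \eqref{Aapp-A}, \eqref{u0v0-error}, \eqref{chi.error}, \eqref{seu0sev0-error} are formally identical, since each invokes only integration by parts against a function vanishing on the boundary of the relevant subdomain. Theorem \ref{th-app-1} is already stated for bounded Lipschitz domains, so it applies directly to $\wh u_\e^{\rm app}$ on $B_{5/4}\cap\Om$ and produces the analog of \eqref{vev0-error}.

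The main obstacle is the control of the two geometric terms in that analog of \eqref{vev0-error}, with the boundary layer $\Om_t$ now measured from the full boundary $\pa(B_{5/4}\cap\Om)=(\pa B_{5/4}\cap\Om)\cup(B_{5/4}\cap\pa\Om)$. The Hessian term lives on $(B_{5/4}\cap\Om)\setminus (B_{5/4}\cap\Om)_{3\e_n}$, which lies at distance at least $3\e_n$ from $\pa\Om$, so interior $H^2$ plus the covering argument yielding \eqref{2.27} applies unchanged. The boundary layer term $\|\na\wh u_\e^{\rm app}\|_{L^2((B_{5/4}\cap\Om)_{4\e_n})}$ meets $\pa\Om$ nontrivially and is the delicate one; here the $C^1$ regularity of $\pa\Om$ enters crucially via a global boundary Meyers estimate, which gives $\na\wh u_\e^{\rm app}\in L^q(B_{5/4}\cap\Om)$ for some $q>2$ depending only on $d,\Lm,\Om$. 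H\"older's inequality then produces the $\e_n^{1-2/q}$ decay exactly as in \eqref{2.26}, and the analogs of \eqref{2.28}--\eqref{2.29} follow from boundary Meyers applied also to $u_\e$, $u_\e^{\rm app}$, $\wh u_\e^{\rm app}$.

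Combining these estimates gives the boundary analog of \eqref{ueu0-error}, namely a decomposition of $\na u_\e$ by $\na \wh u_\e + S_{\e_n}((\na_y\mathcal{X})^\e \na \wh u_\e)$ with rate $Q^{-\tau}+\e_n^\sigma$ on $B_1\cap\Om$. Assembling with Step~1 in the manner of the proof of Theorem \ref{th-app} (renaming $A^\flat=\wh{\mathcal{A}}^\sharp$ and $u^\flat_{\e'}=\wh u_{\e'}$) and rescaling from $r=1$ to arbitrary $r\in(0,r_0)$ yields \eqref{thb-app-re}. The preservation of \eqref{ellipticity}, \eqref{periodicity} and \eqref{smoothness-n} for $A^\flat$ is inherited from the corresponding statements about $\wh{\mathcal{A}}$ noted after \eqref{def.hatA}.
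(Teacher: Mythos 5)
Your proposal is correct and takes exactly the route the paper intends: the paper itself gives no separate proof of Theorem \ref{th-app-bd} but says only ``by similar arguments as above,'' and your argument is precisely the careful execution of that remark — noting that the reperiodization step is purely algebraic and boundary-independent, that Theorem \ref{th-app-1} already applies to Lipschitz domains such as $B_{5/4}\cap\Omega$, that the Hessian term stays away from $\partial\Omega$ so interior $H^2$ suffices, and that boundary Meyers estimates for the $C^1$ (hence Lipschitz) domain supply the $\e_n^{1-2/q}$ decay for the boundary layer term.
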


 \section{The uniform Calder\'{o}n-Zygmund estimates} \label{sec.4}
  
In this section we prove the uniform Calder\'{o}n-Zygmund  estimate for the equation \eqref{operaotr-n}. Let us first consider the interior estimate.

 \begin{theorem}\label{th-in-w1p}
 Assume $A$ satisfies the assumptions \eqref{ellipticity}, \eqref{periodicity} and \eqref{smoothness}. Let $u_\e$ be a weak solution to
 \begin{align} \label{th-in-w1p-eq}
 -\txtdiv \big( A(x/\e_1,x/\e_2,\cdots, x/\e_n) \na u_\e \big)= \txtdiv f\quad \text{in } 5B_0,
 \end{align} with $f\in L^p(5B_0)^d$ for some $2<p<\infty$, where $B_0=B(x_0,r_0)$ for some $x_0\in \R^d, r_0>0$. Then $\na u_\e \in L^p(B_0)^d$ and
 \begin{align} \label{th-in-w1p-re}
\bigg(  \fint_{B_0}     |\na u_\e|^p \bigg)^{1/p} \leq C \bigg\{ \bigg( \fint_{5B_0} |\na u_\e|^2\bigg)^{1/2}+\bigg(\fint_{5B_0 } |f|^p\bigg)^{1/p}  \bigg\},
\end{align}
where $C$ depends only on $d, \Lambda, p, n,$ and $(\tau,L)$ in \eqref{smoothness}.
 \end{theorem}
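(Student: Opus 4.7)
The plan is an induction on the number $n$ of oscillating scales, combined with Shen's real-variable theorem for $W^{1,p}$ estimates. The base case $n=1$ is the classical Avellaneda--Lin uniform $W^{1,p}$ estimate in periodic homogenization. For the inductive step, one assumes that at $n-1$ scales the interior $L^q$ gradient estimate holds for every $q<\infty$, and one aims to prove the same at $n$ scales.

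To apply Shen's real-variable theorem, fix $q>p$ and a small $\eta>0$, and aim to show the following local decomposition: for any ball $B = B(x_1, r)$ with $2B \subset 5B_0$ and any solution $v_\e$ of the homogeneous equation $-\txtdiv(A_\e \na v_\e)=0$ in $2B$, there exist $R_B, F_B$ on $B$ with $\na v_\e = R_B + F_B$ and
\begin{equation*}
    \Big(\fint_B |F_B|^2\Big)^{1/2} \le \eta \Big(\fint_{2B}|\na v_\e|^2\Big)^{1/2}, \quad \Big(\fint_B |R_B|^q\Big)^{1/q} \le C_\eta \Big(\fint_{2B}|\na v_\e|^2\Big)^{1/2}.
\end{equation*}
Given this, for each ball one splits $u_\e = v_\e + w_\e$ on $2B$, where $w_\e$ solves $-\txtdiv(A_\e\na w_\e)=\txtdiv f$ with zero Dirichlet data on $\partial(2B)$; Meyers' estimate gives $(\fint_{2B}|\na w_\e|^p)^{1/p}\le C(\fint_{2B}|f|^p)^{1/p}$, and Shen's real-variable theorem delivers \eqref{th-in-w1p-re}.

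The construction of $R_B$ and $F_B$ splits into two regimes. Choose $Q\gg 1$ such that $Q^{-\tau}\le \eta/2$, and set $C_0=C_0(Q,\eta)$ large. In the \emph{large-scale regime} $r \ge C_0 \e_n$, apply Theorem \ref{th-app} to $v_\e$ on $2B$ to obtain a solution $u^\flat_{\e'}$ of an $(n-1)$-scale equation on $B$ together with the corrector term $U_{\e'}$. Set $R_B := \na u^\flat_{\e'} + U_{\e'}$ and $F_B := \na v_\e - R_B$. The $L^2$ bound on $F_B$ is exactly \eqref{th-app-re} for the chosen $Q$ and $C_0$; the $L^q$ bound on $\na u^\flat_{\e'}$ follows from the inductive hypothesis applied to the $(n-1)$-scale operator; and the $L^q$ bound on $U_{\e'}$ follows because $U_{\e'}$ is the partial-smoothing operator applied to $\na u^\flat_{\e'}$ against a kernel bounded in $L^\infty$ by $\|\na_y \mathcal{X}^\sharp\|_\infty$, which is finite (with a $Q$-dependent constant) via standard corrector estimates for the matrix $A^\sharp$, H\"older continuous in $y_1,\dots,y_{n-1}$ with the same $(\tau,L)$ as $A$. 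In the \emph{small-scale regime} $r \le C_0 \e_n$, rescale by $y=(x-x_1)/r$; the rescaled coefficient $\tilde{A}(y) = A((x_1+ry)/\e_1,\dots,(x_1+ry)/\e_n)$ has H\"older seminorm bounded by $nL(r/\e_n)^\tau \le nLC_0^\tau$, so classical $L^q$ gradient estimates for elliptic equations with H\"older continuous coefficients give $(\fint_B|\na v_\e|^q)^{1/q} \le C(\fint_{2B}|\na v_\e|^2)^{1/2}$ directly, and one simply takes $R_B := \na v_\e$, $F_B := 0$.

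The main obstacle will be the $L^q$ control of $U_{\e'}$ in the large-scale regime: as the partial-smoothing of $\na u^\flat_{\e'}$ at radius $\e_n'$, which may be comparable to $r$, its $L^q$ average on $B$ must be controlled by an $L^q$ average of $\na u^\flat_{\e'}$ on a slightly enlarged ball --- the ``double-averaging'' mentioned in the introduction. One must verify that this enlargement is absorbed by the inductive hypothesis applied on a fixed multiple of $B$, and that the $Q$-dependent bound on $\|\na_y\mathcal{X}^\sharp\|_\infty$ (stemming from the loss of $y_n$-regularity in $A^\sharp$) does not blow up during the induction; this forces $Q$ to be fixed once and for all as a function of $\eta$, before the induction on $n$ begins.
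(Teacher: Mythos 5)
Your high-level plan is the same as the paper's: induction on $n$, base case $n=1$ from Avellaneda--Lin, Theorem \ref{th-app} for the scale-reduction step, and Shen's real-variable theorem to upgrade the $L^2$ approximation to $L^p$. The small-scale regime treatment (rescaling so that the H\"older seminorm is $\lesssim L(C_0)^\tau$ and invoking classical elliptic regularity) also matches the blow-up argument the paper uses.

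The genuine gap is in the $L^q$ bound on $U_{\e'}$ in the large-scale regime, and you already put your finger near it at the end of the proposal but do not resolve it. You propose to bound $M_{\e_n'}[U_{\e'}]$ (or $U_{\e'}$ directly) using $\|\na_y\mathcal{X}^\sharp\|_\infty$. The corrector $\mathcal{X}^\sharp$ solves a cell problem in the $y_n$ variable with coefficient $A^\sharp(y_1,\dots,y_{n-1},\cdot)$, and the regularity of $\na_y\mathcal{X}^\sharp$ is governed by the regularity of $A^\sharp$ in $y_n$, not in $y_1,\dots,y_{n-1}$; since $A^\sharp(\dots, y_n)=A(\dots, q y_n)$ with $q$ as large as $Q^{n-1}$, the $y_n$-H\"older seminorm of $A^\sharp$ is $\sim L q^\tau$, hence $\|\na_y\mathcal{X}^\sharp\|_\infty$ is unavoidably $Q$-dependent. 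This makes $C_1$ in \eqref{W1p_real_cond1} depend on $Q$, so the threshold $\eta_0=\eta_0(C_1,\dots)$ in Theorem \ref{th W1P real} also depends on $Q$; but $Q$ was supposed to be chosen to make $CQ^{-\tau}<\eta_0/2$. That is a circular dependence, and ``fixing $Q$ once and for all as a function of $\eta$'' does not break it because $\eta_0$ itself moves as you move $Q$. The paper flags exactly this obstruction after \eqref{p-thw1p-10}: ``we cannot simply take the $L^p$ norm of $U_{\e'}$ directly since $\|\nabla\mathcal{X}^\sharp\|_{L^p(B_1,dy_n)}$ depends on $q$ and hence on $Q$.''

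The missing idea is the double-averaging. Instead of running the real-variable argument on $\mathcal{F}=\na u_\e$ with $t=0$, the paper runs it on $\mathcal{F}=M_{\e_n'}[\na u_\e]$ with $\e_n'=q\e_n$ (the new periodic cell size of $A^\sharp$) and $t=\e_n''=k_0 Q_0^{n-1}\e_n$. Setting $\mathcal{R}_B=M_{\e_n'}[\na u^\flat_{\e'}-U_{\e'}]$, the $U_{\e'}$ piece is controlled pointwise by
\begin{equation*}
|M_{\e_n'}[U_{\e'}](y)|^2 \le \Big(\sup_{z}\fint_{B_1}|\na_y\mathcal{X}^\sharp(z/\e',\cdot)|^2\Big)\, \fint_{B_{2\e_n'}(y)}|\na u^\flat_{\e'}|^2,
\end{equation*}
where the prefactor is the $L^2$ energy estimate on the cell for $\mathcal{X}^\sharp$, which depends only on $d,\Lambda$ and is uniform in $Q$. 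This gives $C_1$ independent of $Q$, closes the real-variable argument, and leaves a double-averaged estimate $\|M_{\e_n''}[M_{\e_n'}[\na u_\e]]\|_{L^{p_1}}$; the two averagings are peeled off afterwards, the outer one trivially by \eqref{p-thw1p-6} and the inner one by the small-scale pointwise Lipschitz bound $|\na u_\e(y)|\le C M_{2\e_n''}[\na u_\e](y)$. Without this averaging device, the $L^\infty$ route you propose does not yield a $Q$-independent constant, and the induction would not close.
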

 The proof of  Theorem  \ref{th-in-w1p} relies on a  real-variable argument (a quantitative refined version of the one in \cite{caffarelli1998}) involving a double-averaging estimate. We define the averaging operator as
\begin{equation}
    M_t[f](x) :=\bigg( \fint_{B_t(x)} |f|^2 \bigg)^{1/2}.
\end{equation}
%Some basic properties of $M_t$ can be found in \cite{zhugead21}.
We need the averaging operator in the real-variable argument for two reasons. The first reason is that the approximation in \eqref{th-app-re} is meaningful only if $r \gg Q^{n-1} \e_n$ for sufficiently large $Q \gg 1$. This forces us to take the large-scale averaging estimate above the scale $t = CQ^{n-1} \e_n$ for sufficiently large $C$ and ignore the possible irregularity below that scale. The second reason to introduce another averaging is due to the special structure of the corrector $\mathcal{X}^{\sharp}$ in $U_{\e'}$ given by \eqref{Ue}. Recall that $\mathcal{X}^{\sharp} = \mathcal{X}^{\sharp}(y_1,y_2,\cdots, y_n)$ is 1-periodic in each $y_i$. But its gradient in $y_n$ does not have an $L^p$ estimate uniformly in $Q$ before we prove it in case of $n$ oscillating scales (otherwise we will run into a circular reasoning), which causes a big problem in the real-variable argument. To resolve this issue, we take the averaging for another time at scale $t = \e'_n = q\e_n$, a scale comparable to the size of the periodic cell (depending on $Q$ as $1<q\le Q^{n-1}$). This allows us to use only the periodic structure of $\mathcal{X}^\sharp$ and the $L^2$ energy estimate (independent of $Q$). Overall, the principle is that we have to make sure all the constants, except for $\eta$, in the real-variable argument (Theorem \ref{th W1P real} below) are independent of $Q$.

 \begin{theorem}\cite[Theorem 2.1 and Remark 2.4]{shenjga2023}\label{th W1P real}
  Let $p_0>2$, $\mathcal{F}\in L^2(4B_0)$ and $G\in L^p(4B_0)$ for some $2<p<p_0$, where $B_0$ is a ball in $\mathbb{R}^d$. Let $0 \leq t < c_0 \text{diam}(B_0)$ be a fixed
number.  Suppose that for each ball $B\subset 2B_0$ with the property that $t < \text{diam}(B) \leq  c_0 \text{diam}(B_0)$, there exists two measurable functions $\mathcal{F}_B$ and $\mathcal{R}_B$ on $2B$ such that $|\mathcal{F}|\leq |\mathcal{F}_B|+|\mathcal{R}_B|$ on $2B$, and
\begin{align}
&\bigg(\fint_{2B} |\mathcal{R}_B|^{p_0}\bigg)^{1/p_0}\leq C_1 \bigg\{ \bigg(\fint_{4B} |\mathcal{F}|^2\bigg)^{1/2} + \sup_{B\subset B' \subset 4B_0} \bigg(\fint_{B' } |G|^2\bigg)^{1/2}\bigg\},\label{W1p_real_cond1}\\
  &\bigg(\fint_{2B} |\mathcal{F}_B|^2\bigg)^{1/2} \leq C_2  \sup_{B\subset B' \subset 4B_0}\bigg(\fint_{B' } |G|^2\bigg)^{1/2}+ \eta \bigg(\fint_{4B}  |\mathcal{F}|^2\bigg)^{1/2} , \label{W1p_real_cond2}
\end{align}
where $C_1,C_2>0, 0<c_0<1$, and $\eta\geq 0$. Then there exists $\eta_0 >0$, depending only on $C_1, C_2, c_0, p, p_0,$
 with the property that if $0 \leq \eta < \eta_0$, then
 \begin{align} \label{th W1P real re1}
\bigg(  \fint_{B_0}   M_t[\mathcal{F}]^p \bigg)^{1/p} \leq C \bigg\{ \bigg(\fint_{4B_0} |\mathcal{F}|^2\bigg)^{1/2}+\bigg(\fint_{4B_0} |G|^p\bigg)^{1/p}\bigg\},
\end{align}
where $C$ depends only on $d, C_1,C_2, c_0, p$ and $ p_0$. If $t=0$, then \eqref{th W1P real re1} is replaced by
\begin{align}\label{th W1P real re2}
\bigg(\fint_{B_0} |\mathcal{F}|^p\bigg)^{1/p} \leq C \bigg\{ \bigg(\fint_{4B_0} |\mathcal{F}|^2\bigg)^{1/2}+\bigg(\fint_{4B_0} |G|^p\bigg)^{1/p}\bigg\}.
\end{align}
\end{theorem}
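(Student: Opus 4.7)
The plan is a Calder\'{o}n--Zygmund-type good-$\lambda$ argument, as developed by Shen in several related settings. The essential mechanism is that \eqref{W1p_real_cond1} provides a higher-integrability reserve on $\mathcal{R}_B$ (at the exponent $p_0>p$), while \eqref{W1p_real_cond2} provides a quantitative smallness reserve on $\mathcal{F}_B$ (through the factor $\eta$ in front of the $L^2$ average of $\mathcal{F}$). Balancing these two via the decomposition $|\mathcal{F}|\le|\mathcal{F}_B|+|\mathcal{R}_B|$ on the appropriate stopping-time balls is what lifts the a priori $L^2$ information on $\mathcal{F}$ to a quantitative $L^p$ estimate on $M_t[\mathcal{F}]$.

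First I would normalize $\lambda_0 := \bigl(\fint_{4B_0}|\mathcal{F}|^2\bigr)^{1/2}+\bigl(\fint_{4B_0}|G|^p\bigr)^{1/p}=1$ and restrict attention to $\lambda \gg 1$ (smaller levels contribute at most $\lambda_0^p|B_0|$, which already has the desired form). For such $\lambda$, consider the level set $E_\lambda=\{x\in B_0: M_t[\mathcal{F}](x)>\lambda\}$. Because $M_t$ is a ball average at scale $t$, $E_\lambda$ is contained in a level set of the maximal function restricted to balls of radius $\ge t$. A Vitali/stopping-time covering then produces a pairwise disjoint family $\{B_j\}\subset 2B_0$, each with $\mathrm{diam}(B_j)\in [t,c_0\mathrm{diam}(B_0)]$, satisfying the CZ selection bounds $\fint_{2B_j}|\mathcal{F}|^2\gtrsim \lambda^2$ and $\fint_{4B_j}|\mathcal{F}|^2\lesssim \lambda^2$, and $E_\lambda\subset \bigcup_j 2B_j$ up to null sets. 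The scale restriction $\mathrm{diam}(B_j)\ge t$ is built in so that the hypotheses \eqref{W1p_real_cond1}--\eqref{W1p_real_cond2} apply on each $B_j$.

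On each such $B_j$ I invoke the hypothesis to split $|\mathcal{F}|\le|\mathcal{F}_{B_j}|+|\mathcal{R}_{B_j}|$. For a large parameter $K>1$ to be fixed later, Chebyshev applied to $M_t[\mathcal{R}_{B_j}]$ at the strong exponent $p_0$ and to $M_t[\mathcal{F}_{B_j}]$ at the $L^2$ level, combined with \eqref{W1p_real_cond1}--\eqref{W1p_real_cond2} and the CZ bound $\fint_{4B_j}|\mathcal{F}|^2\lesssim \lambda^2$, yields
\[
|\{x\in 2B_j: M_t[\mathcal{F}](x)>K\lambda\}|\le \bigl(C_1^{p_0}K^{-p_0}+C_2^2\eta^2 K^{-2}\bigr)|B_j|+\text{terms from }G.
\]
Summing over $j$ and dominating $\sum_j|B_j|$ by $|\{M_t[\mathcal{F}]>c\lambda\}\cap 2B_0|$ (using the lower bound in the CZ selection) gives a good-$\lambda$ inequality. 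Multiplying by $p(K\lambda)^{p-1}$, integrating in $\lambda\ge \lambda_0$, and applying the layer-cake formula produces
\[
\int_{B_0}M_t[\mathcal{F}]^p\le \bigl(CK^{p-p_0}+C\eta^2 K^{p-2}\bigr)\int_{2B_0}M_t[\mathcal{F}]^p+C\int_{4B_0}|G|^p+C\lambda_0^p|B_0|.
\]
Choosing $K$ large so that $CK^{p-p_0}<1/4$ (using $p<p_0$) and then $\eta_0$ small so that $C\eta_0^2 K^{p-2}<1/4$ allows the first term to be absorbed. When $t=0$ one has $M_0[\mathcal{F}]=|\mathcal{F}|$ a.e.\ and the stopping-time descends to Lebesgue points, producing the sharper \eqref{th W1P real re2}.

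The main obstacle is twofold. First, the absorption step takes place on $2B_0$ rather than $B_0$, which is resolved by the standard device of iterating the inequality along a dyadic chain of dilated balls between $B_0$ and $4B_0$ (or, equivalently, running the argument on a shrinking family and summing a geometric series), while carefully tracking that the final constants depend only on $d,C_1,C_2,c_0,p,p_0$. Second, and more conceptually, the argument hinges on the precise matching between the averaging scale $t$ and the stopping-time cutoff: the hypotheses are available only for balls of diameter exceeding $t$, so the CZ decomposition must terminate at that scale, and the sub-scale oscillations of $\mathcal{F}$ that the stopping-time cannot resolve are exactly what the operator $M_t$ on the left-hand side absorbs. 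This is why the truncated conclusion involves $M_t[\mathcal{F}]$ and degenerates to the pointwise statement only when $t=0$.
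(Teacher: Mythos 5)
The paper does not prove this statement; it cites it verbatim from \cite[Theorem 2.1 and Remark 2.4]{shenjga2023}, so there is no in-paper proof to compare against. Your outline correctly reproduces the Shen/Caffarelli--Peral real-variable mechanism that the cited result rests on: a Calder\'on--Zygmund stopping-time covering of $\{M_t[\mathcal{F}]>\lambda\}$ truncated at scale $t$, a good-$\lambda$ inequality obtained on each selected ball by splitting via $|\mathcal{F}|\le|\mathcal{F}_B|+|\mathcal{R}_B|$ and applying Chebyshev at exponent $p_0$ to the $\mathcal{R}_B$ piece and at exponent $2$ (with the $\eta$-gain) to the $\mathcal{F}_B$ piece, a layer-cake integration, and absorption by first taking $K$ large (possible because $p<p_0$) and then $\eta_0$ small. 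You also correctly identify both of the delicate points: the matching between the averaging scale $t$ and the stopping-time cutoff, which is exactly why the truncated conclusion reads $M_t[\mathcal{F}]$ rather than $\mathcal{F}$, and the $2B_0$-versus-$B_0$ absorption, which needs either a scale iteration or an a priori truncation to justify finiteness of $\int M_t[\mathcal{F}]^p$ before one can subtract. The one step left implicit is the handling of the $G$-term: because the hypothesis involves $\sup_{B\subset B'\subset 4B_0}(\fint_{B'}|G|^2)^{1/2}$, one must pass through a maximal function of $|G|^2$ (restricted to $4B_0$) and use the boundedness of that maximal operator on $L^{p/2}$ to convert the sup into the $L^p$ average of $G$ appearing on the right of the conclusion; this is routine but should be said.
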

%\begin{proof}
%See Theorem 2.1 and Remark 2.4 in \cite{shenjga2023}.
%\end{proof}

\begin{proof}[Proof of Theorem \ref{th-in-w1p}] Let us first consider the case $f=0$.
 We use an inductive argument on the number of scales to prove the theorem. Note that the result is well-known for $n=1$ (see e.g., \cite{shenbook2018}).
Assume the theorem is true if the number of scales is strictly less than $n$, which means that for any $\infty >  \e_1 \ge \e_2 \ge \cdots \ge \e_{n-1} > 0$, the solution $v_\e$ to
$$-\txtdiv \big( A(x/\e_1, \cdots, x/\e_{n-1}) \na v_\e \big)=0 \quad \text{in }  B_{2r} $$
satisfies the uniform interior $W^{1,p}$ estimate
%\begin{align}\label{p-thw1p-1}
%\bigg(\fint_{B} |\na v_\e |^p\bigg)^{1/p} \leq C    r\bigg(\fint_{2B} |F|^p\bigg)^{1/p} +\bigg(\fint_{2B} |f|^p\bigg)^{1/p} .
%\end{align}
\begin{align}\label{p-thw1p-1}
\bigg(\fint_{B_{3r/2}} |\na v_\e |^p\bigg)^{1/p} \leq C_p  \bigg(\fint_{B_{2r}} |\na v_\e|^2\bigg)^{1/2} 
\end{align} for any $0<r<\infty$ and any $2<p<\infty$. 

% Denote 
% \begin{equation}
%     M_t[f](x) :=\bigg( \fint_{B_t(x)} |f|^2 \bigg)^{1/2}.
% \end{equation}
We next prove the theorem for $n$ scales by using Theorem \ref{th W1P real} with $\mathcal{F}:=  M_{\e_n'}[\na u_\e] $ with $\e_n' = q\e_n$ given in \eqref{ei'}. Fix any $p\in (2,\infty)$ and $p_1\in (2, p)$.
%\jz{shall we list the properties of $M_t(f)$, and then cite the property directly?  Or   just perform the calculations (as i have done in 4.12,4.13) directly without recalling the properties}

Let $Q>1$ to be determined later, and $\e_n'=q\e_n\leq Q^{n-1} \e_n<r<c_0 r_0$ for some fixed $c_0 \in (0,1)$. For each ball $B_r=B_r(x)$ such that  $B_{r} \subset   2B_0 = B(x_0, 2r_0)$, we need to construct $\mathcal{F}_B$ and $\mathcal{R}_B$, which satisfy  \eqref{W1p_real_cond1} and \eqref{W1p_real_cond2}, respectively.
Since $u_\e$ is a solution to
 $
  -\txtdiv \big( A(x/\e_1, \cdots, x/\e_n) \na u_\e \big)=0$   in $B_{4r} \subset 4B_0$,
  by Theorem \ref{th-app}, there exist a 1-periodic matrix $A^\flat = A^\flat(y_1,y_2,\cdots,y_{n-1})$, scales $\e' = (\e_1',\cdots, \e_{n-1}')$, and a solution $  u_{\e'}^\flat(x) $ to
     \begin{align} \label{p-thw1p-2}
     -\txtdiv \big( A^\flat(x/\e'_1, \cdots,   x/\e'_{n-1}) \na u_{\e'}^\flat \big)=0  \quad \text{in } B_{2r},
     \end{align}
     such that
     \begin{align}\label{p-thw1p-3}
     \| \na u_{\e'}^\flat \|_{L^2(B_{2r})}\leq C  \| \na u_\e \|_{L^2(B_{4r})},
     \end{align} and
   \begin{align} \label{p-thw1p-4}
   \begin{split}
        &\| \na u_{\e} - \na u_{\e'}^\flat -U_{\e'}\|_{L^2(B_{2r})}
       % &\le C  (  Q^{-\tau}  + (\de'_n)^\sigma)  \| \na u_\e \|_{L^2(B_{2r})}  \\
         \le C  \bigg\{  Q^{-\tau}+\bigg(\frac{Q^{n-1}  \e_n}{ r}\bigg) ^\sigma \bigg\}   \| \na u_\e \|_{L^2(B_{4r})},
                  \end{split}
    \end{align}
          where $U_{\e'}$ is given by \eqref{Ue}.
   %\begin{align}\label{p-thw1p-7}
    %\begin{split}
    % V_\e=(\e_n')^{- d}\int_{\mathbb{R}^d}
   %\varphi  \left(\frac{x-z}{ \e_n'}\right)  (\na_y\pi) \left(\frac{z}{\e_1'},\cdots,\frac{z}{\e'_{n-1}}, \frac{x}{\e_n'}\right)  \na u_\e^{\widehat{A^\sharp}}(z)  dz.
   %\end{split}
   % \end{align}
Let    
 \begin{align*}
\mathcal{F}_B(x)=  M_{\e_n'}[\na u_\e- \na u_{\e'}^\flat -U_{\e'}](x)  \,\,\text{ and }\,\, \mathcal{R}_B(x)= M_{\e_n'}[\na u_{\e'}^\flat + U_{\e'}](x) .
\end{align*} 
By the triangle inequality, we have $|\mathcal{F} | \le |\mathcal{F}_B |+ |\mathcal{R}_B|.$ 

We recall a basic property of the averaging operator $M_t$. If $F\in L^2_{\rm loc}(\R^d)$, then for any $r \ge t>0$, we have
\begin{equation}\label{p-thw1p-6}
    \fint_{B_r} M_t[F]^2 \le C\fint_{B_{r+t}} |F|^2 \le C\fint_{B_{2r}} |F|^2 \le C\fint_{B_{2r}} M_t[F]^2.
\end{equation}
This can be shown easily by the Fubini's theorem.

Consequently, by the assumption $r> \e_n'$, we have  
\begin{equation}
    \fint_{B_r }|\mathcal{F}_B|^2 \le C \fint_{B_{2r} } | \na u_{\e} - \na u_{\e'}^\flat -U_{\e'} |^2 ,
\end{equation}
and
\begin{equation}
    \fint_{B_{4r} } | \na u_{\e} |^2 \le C \fint_{B_{4r} }|\mathcal{F}|^2.
\end{equation}
% Note that 
%  \begin{align} \label{p-thw1p-5}
%    \begin{split}
% \int_{B_r(x)}|\mathcal{F}_B|^2& = \int_{B_r(x)} \fint_{B_{\e_n'}(z)} | (\na u_{\e} - \na u_{\e'}^\flat -U_{\e'})(y) |^2 dy dz \\
%         &= \frac{1}{|B_{\e_n'}|} \int_{B_r(x)} \int_{B_{r+\e_n'}(x)} | (\na u_{\e} - \na u_{\e'}^\flat -U_{\e'}) (y) |^2  \mathbbm{1}_{\{y:|y-z|<\e_n'\}}  dy dz\\
%         &\leq   \int_{B_{r+\e_n' }(x)} | (\na u_{\e} - \na u_{\e'}^\flat -U_{\e'}) (y) |^2  dy  \\
%          &\leq   \int_{B_{2r}(x)} | (\na u_{\e} - \na u_{\e'}^\flat -U_{\e'}) (y) |^2  dy,
%          %&\le C  \bigg\{  Q^{-\tau}+\bigg(\frac{Q^{n-1}  \e_n}{ r}\bigg) ^\sigma \bigg\}   \| \na u_\e \|_{L^2(B_{4r})},
%                   \end{split}
%     \end{align}
%     where $\mathbbm{1}_E$ is the characteristic function of the set $E.$ 
% On the other hand, since  $$ \int_{B_{4r}(x)} \mathbbm{1}_{\{y:|y-z|<\e_n'\}} dz =|B_{\e_n'}| $$ if $y \in B_{2r}(x) $. By Fubini's theorem, we deduce that 
% \begin{align}   \label{p-thw1p-6}
%    \begin{split}
%     \int_{B_{2r}(x)} | \na u_{\e} (y) |^2 dy &\leq \frac{1}{|B_{\e_n'}|} \int_{B_{4r}(x)} | \na u_{\e} (y) |^2 \int_{B_{4r+\e_n'}(x)} \mathbbm{1}_{\{y:|y-z|<\e_n'\}} dz dy \\
%         & = \int_{B_{4r}(x)} \fint_{B_{\e_n'}(z)} |  \na u_{\e} (y) |^2 dy dz \\
%                      &= \int_{B_{4r}(x)}|\mathcal{F}|^2.    %&\le C  \bigg\{  Q^{-\tau}+\bigg(\frac{Q^{n-1}  \e_n}{ r}\bigg) ^\sigma \bigg\}   \| \na u_\e \|_{L^2(B_{4r})},
%                   \end{split}
%     \end{align}
Combining the last two inequalities with \eqref{p-thw1p-4}, we have
 \begin{align} \label{p-thw1p-7}
    \fint_{B_r }|\mathcal{F}_B|^2 \leq   C  \bigg\{  Q^{-\tau}+\bigg(\frac{Q^{n-1}  \e_n}{ r}\bigg) ^\sigma \bigg\}^2   \fint_{B_{4r} }|\mathcal{F}|^2.  
            \end{align}
We now choose $Q = Q_0>1$ large enough such that $CQ_0^{-\tau}< \eta_0/2$, and then take  $r> k_0 Q_0^{n-1} \e_n $ for some  $k_0$ large enough such that $C Q_0^{(n-1)\sigma} (\e_n/r)^\sigma   < \eta_0/2,$ where $\eta_0$ is given by Theorem \ref{th W1P real} depending only on $C_1, C_2, c_0, p, p_1.$
It follows that
$\mathcal{F}_B$ satisfies condition \eqref{W1p_real_cond2} for $r>k_0 Q_0^{n-1} \e_n$ with $C_2=0$. In the following, we will verify that $\mathcal{R}_B$ satisfies \eqref{W1p_real_cond1} and the constant $C_1$ is independent of $Q$.

Note that  
 \begin{align} \label{p-thw1p-8}
   \begin{split}
\fint_{B_r }|\mathcal{R}_B|^p& = \fint_{B_r } M_{\e_n'}[\na u_{\e'}^\flat + U_{\e'}]^p \\
%&\leq  \int_{B_r(x)} \Big(\fint_{B_{\e_n'}(z)} \big(2| (\na u_{\e'}^\flat(y)|^2+2|U_{\e'} (y) |^2\big) dy\Big)^{p/2} dz \\
&\leq  2^{p-1} \fint_{B_r }     |M_{\e_n'} [ \na u_{\e'}^\flat ] (z)| ^{p } dz    + 2^{p-1} \fint_{B_r }  |M_{\e_n'} [  U_{\e'} ](z) |^{p}   dz .
        %&= \frac{1}{|B_{\e_n'}|} \int_{B_r(x)} \int_{B_{\e_n'+r}(x)} | \na u_{\e'}^\flat -U_{\e'}  (y) |^p \mathcal{I}_{\{y:|y-z|<\e_n'\}}  dy dz\\
%        &\leq   2^p \int_{B_{r+\e_n' }(x)} \big(|  \na u_{\e'}^\flat(y)|^p + |U_{\e'}  (y) |^p \big) dy\\
%          &\leq  2^p \int_{B_{2r}(x)} \big(|  \na u_{\e'}^\flat(y)|^p + |U_{\e'}  (y) |^p \big) dy.
\end{split}
    \end{align}
Since $ u_{\e'}^\flat$ satisfies \eqref{p-thw1p-2}, and the coefficient matrix $ A^\flat$  involves at most $n-1$ scales and is periodic and  H\"{o}lder continuous $y_i=x/\e_i'$ for $i=1,\cdots,n-1$. By the inductive assumption, we know that $ u_{\e'}^\flat $ satisfies  \eqref{p-thw1p-1},
 which combined with \eqref{p-thw1p-3} implies
\begin{align*} 
\bigg(\fint_{B_{3r/2}} | \na u_{\e'}^\flat |^p\bigg)^{1/p} \leq C  \bigg(\fint_{B_{4r}} | \na u_\e  |^2\bigg)^{1/2},
\end{align*} 
for any $ k_0\e_n'\le k_0 Q_0^{n-1 } \e_n \leq r \leq c_0 r_0 $. 
By the H\"{o}lder's inequality and Fubini's theorem,  
    \begin{align}\label{p-thw1p-9}
    \begin{split}
    \fint_{B_r }     |M_{\e_n'} [ \na u_{\e'}^\flat ] (z)| ^{p } dz  
    & \leq \fint_{B_r } \fint_{B_{\e_n'}(z)}  |  \na u_{\e'}^\flat (y)|^p  dy dz\\
    & \leq  C \fint_{B_{3r/2} }   |  \na u_{\e'}^\flat (y)|^p  dy\\
    &\leq C \bigg(\fint_{B_{4r} } | \na u_\e  |^2\bigg)^{p/2}\\
    &\leq C \bigg(\fint_{B_{4r} }|\mathcal{F}|^2\bigg)^{p/2}, 
    \end{split}
    \end{align}
where we have used \eqref{p-thw1p-6} in the last step.

To bound the second term on the right hand side of \eqref{p-thw1p-8}, we have to take advantage of the periodic structure of $\mathcal{X}^\sharp$ and it is right here that the averaging operator $M_{\e_n'}$ plays a key role.
We observe that, for each $y\in B_r$,
\begin{align*} 
\begin{split}
   |M_{\e_n'} [  U_{\e'} ](y) |^2 &=  \fint_{B_{\e_n'}(y)} \bigg|(\e_n')^{- d}\int_{\mathbb{R}^d}
   \varphi  \bigg(\frac{x-z}{ \e_n'}\bigg)  (\na_y\mathcal{X}^\sharp ) \bigg(\frac{z}{\e_1'},\cdots,\frac{z}{\e'_{n-1}}, \frac{x}{\e_n'}\bigg)  \na u_{\e'}^\flat(z)   dz \bigg|^2 dx \\
  % &=  \fint_{B_{\e_n}} \bigg|(\e_n)^{- d}\int_{\mathbb{R}^d}
  % \varphi  \bigg(\frac{x-z}{ \e_n}\bigg)  (\na_y\mathcal{X}^\sharp) \bigg(\frac{qz}{\e_1'},\cdots,\frac{qz}{\e'_{n-1}}, \frac{x}{\e_n}\bigg)  \na u_{\e'}^\flat(q z)  dz \bigg|^2 dx\\
   &\leq \fint_{B_{\e_n'}(y)}(\e_n')^{- d}\int_{B_{2\e_n'}(y)}
\varphi  \bigg(\frac{x-z}{ \e_n'}\bigg) \bigg| (\na_y\mathcal{X}^\sharp) \bigg(\frac{z}{\e_1'},\cdots,\frac{z}{\e'_{n-1}}, \frac{x}{\e_n'}\bigg)  \na u_{\e'}^\flat(z)\bigg|^2  dz dx\\
&\leq \fint_{B_{2\e_n'}(y)}(\e_n')^{- d}\int_{B_{\e_n'}(y)}
\varphi  \bigg(\frac{x-z}{ \e_n'}\bigg) \bigg| (\na_y\mathcal{X}^\sharp) \bigg(\frac{z}{\e_1'},\cdots,\frac{z}{\e'_{n-1}}, \frac{x}{\e_n'}\bigg) \bigg|^2 dx |\na u_{\e'}^\flat(z)|^2  dz \\
   &\leq C \sup_{z\in \R^d}\fint_{\T^d}  \bigg|(\na_y\mathcal{X}^\sharp) \bigg(\frac{z}{\e_1'},\cdots,\frac{z}{\e'_{n-1}}, y\bigg)\bigg|^2 dy  \fint_{B_{2\e_n' }(y)}   |\na u_{\e'}^\flat(z)|^2  dz\\
   & \leq  C \fint_{B_{2\e_n' }(y)}   |\na u_{\e'}^\flat(z)|^2dz = C |M_{2\e_n'} [\nabla u_{\e'}^\flat](y)|^2,
    \end{split}
\end{align*} 
where we have used Cauchy-Schwarz inequality in the first inequality (splitting a half of $(\e_n')^{-d}\varphi(\frac{x-z}{\e_n'})$ whose $L^2$ integral is 1), Fubini's theorem in the second inequality, boundedness of $\varphi$ and 1-periodicity of $\mathcal{X}^\sharp$ in the third inequality (as well as H\"{o}lder's inequality in $z$ variable), and the $L^2$ energy estimate for $\mathcal{X}^\sharp$ in a periodic cell (see the equation \eqref{cell-n}) in the last inequality. The constant $C$ in the above inequality depends only on $d $ and $\Lm$. 
 This combined with \eqref{p-thw1p-9} implies that
\begin{align}\label{p-thw1p-10}
\begin{aligned}
 \fint_{B_r}  |M_{\e_n'} [  U_{\e'} ](z) |^p    
    & \leq  C   \fint_{B_r}    |M_{2\e_n'} [ \na u_{\e'}^\flat ] (y)| ^{p } dy \\
    &\leq C\fint_{B_{4r/3}}    |M_{\e_n'} [ \na u_{\e'}^\flat ] (y)| ^{p } dy
    \leq C \bigg(\fint_{B_{4r}}|\mathcal{F}|^2\bigg)^{p/2},
    \end{aligned}
\end{align} 
for any $k_0 Q_0^{n-1} \e_n \leq r < c_0 r_0$. We point out that we cannot simply take the $L^p$ norm of $U_{\e'}$ directly since $\| \nabla \mathcal{X}^\sharp \|_{L^p(B_1, dy_n)}$ depends on $q$ and hence on $Q$, due to the loss of regularity of  $A^\sharp(y_1,y_2,\cdots, y_n)$ in $y_n$ caused by reperiodization.

 By  taking  \eqref{p-thw1p-9} and  \eqref{p-thw1p-10} into \eqref{p-thw1p-8}, we find that $\mathcal{R}_B  $ satisfies \eqref{W1p_real_cond1} for $ k_0 Q_0^{n-1} \e_n <r < c_0 r_0$ (without the $G$ part) with $C_1$ independent of $Q$. Thus the conditions of Theorem \ref{th W1P real} are all satisfied.
Thanks to \eqref{th W1P real re1}, we obtain that
\begin{align} \label{p-thw1p-11}
\bigg(  \fint_{B_0}    M_{\e_n''} [ M_{\e_n'}[\na u_\e]]^{p_1} \bigg)^{1/{p_1}} \leq C  \bigg(\fint_{4B_0} |M_{\e_n'}[\na u_\e](x) |^2\bigg)^{1/2},
\end{align}
where $\e_n' = q\e_n$ and $\e_n'' = k_0 Q_0^{n-1} \e_n$ and $p_1\in (2,p)$. This is a double-averaging estimate since we have taken average twice in the $L^p$ norm on the left-hand side. At this stage (after using Theorem \ref{th W1P real}), it is safe to remove one averaging in \eqref{p-thw1p-11} by \eqref{p-thw1p-6}. In fact, since $\e_n'' > \e_n'$, \eqref{p-thw1p-11} implies
\begin{equation}\label{en''Scale-CZ}
    \bigg(  \fint_{B_0}    M_{2\e_n''} [\na u_\e]^{p_1} \bigg)^{1/{p_1}} \leq C  \bigg(\fint_{5B_0} | \na u_\e|^2\bigg)^{1/2}.
\end{equation}

Next, we claim that for each $y\in B_0$, we have
\begin{equation}
    |\nabla u_\e(y)| \le C M_{2\e_n''} [\na u_\e](y) = C\bigg( \fint_{B_{2\e_n''}(y)} |\na u_\e|^2 \bigg)^{1/2}.
\end{equation}
This pointwise Lipschitz estimate follows from a well-known blow-up argument at small scales and the fact $\e_n'' = k_0 Q_0^{n-1} \e_n$. Inserting this into \eqref{en''Scale-CZ}, we arrive at
\begin{equation}
    \bigg(  \fint_{B_0}    |\na u_\e|^{p_1} \bigg)^{1/p_1} \leq C  \bigg(\fint_{5B_0} | \na u_\e|^2\bigg)^{1/2}.
\end{equation}
Since $p_1\in (2,p)$ and $p\in (2,\infty)$ are arbitrary with $C$ depending on $p,p_1$, we actually have shown the uniform interior $W^{1,p}$ estimate for any $p\in (2,\infty)$ in the case of $f = 0$.

Finally, we consider general case with  $f\in L^p(5B_0)^d$.  Let $u_\e$ be a solution to \eqref{th-in-w1p-eq}.
 For each ball $B=B_r$ such that  $4 B \subset 2B_0$, let $u_{\e,1} \in H^1_0(4B)$ be the solution  to
 \begin{align} \label{p-thw1p-13}
  -\txtdiv \big( A(x/\e_1,x/\e_2,\cdots, x/\e_n) \na u_{\e,1} \big)= \text{div } f\,\text{ in }  4B ,
  \end{align}
  and $u_{\e,2}$ the solution to
  \begin{align} \label{p-thw1p-14}
  -\txtdiv \big( A(x/\e_1,x/\e_2,\cdots, x/\e_n) \na u_{\e,2} \big)=0\,\text{ in }  4B ,\, \text{ and } \,u_{\e,2}=u_\e \,\text{ on }  \pa (4B) .
  \end{align}
  Thus, $u_\e = u_{\e,1} + u_{\e,2}$ in $4B$.
The energy estimate for \eqref{p-thw1p-13} implies that
$$\bigg(\fint_{4B} |\na u_{\e,1}|^2\bigg)^{1/2} \leq    C \bigg(\fint_{4B}  |f|^2\bigg)^{1/2},$$
while the uniform $W^{1,p}$ estimates for $u_{\e,2}$ proved above implies that
\begin{align*}
\bigg(\fint_{2B} |\na u_{\e,2}|^{p_0}\bigg)^{1/p_0} &\leq  C  \bigg(\fint_{4B} |\na u_{\e,2}|^2\bigg)^{1/2}\\
&\leq C \bigg(\fint_{4B} |\na u_\e|^2\bigg)^{1/2} +   C \bigg(\fint_{4B}  |f|^2\bigg)^{1/2}
\end{align*}
for some $p_0 \in (p, \infty)$.
Let $ \mathcal{F}=|\na u_\e|$,  $\mathcal{F}_B= |\na u_{\e,1}|$, and $\mathcal{R}_B= |\na u_{\e,2} |$. We obtain \eqref{th-in-w1p-re}  from Theorem \ref{th W1P real} immediately (with $t = 0$).
\end{proof}

Theorem \ref{th-in-w1p} provides the uniform interior $W^{1,p}$  estimate for the equation \eqref{operaotr-n}. 
Based on Theorem \ref{th-app-bd} and a boundary version of Theorem \ref{th W1P real} (See Theorem 4.1 and Remark 4.2 in \cite{shenjga2023}.), we can follow the same arguments to prove the following uniform boundary $W^{1,p}$ estimate.
 \begin{theorem}\label{th-b-w1p}
 Let $\Omega$ be a bounded $C^1$ domain. There exists $r_0 = r_0(\Omega)$ such that the following statement is true. Let $B=B(x,r) $ with $x \in \pa\Omega$, $0<r< r_0$.   Assume $A$ satisfies the assumption \eqref{ellipticity}, \eqref{periodicity} and \eqref{smoothness}.  Let $u_\e$ be a weak solution to
 \begin{align} \label{th-b-w1p-eq}
 -\txtdiv \big( A(x/\e_1, \cdots, x/\e_n) \na u_\e \big)= \txtdiv f \, \text{ in } B_{2r}\cap \Omega, \quad   u_\e=0 \,\text{ on } B_{2r}\cap \pa \Omega,
 \end{align}
 with $f\in L^p(B_{2r}\cap \Omega)^d$ for some $2<p<\infty$. Then $\na u_\e \in L^p(B_{r}\cap \Omega)^d$ and
 \begin{align} \label{th-b-w1p-re}
\bigg(  \fint_{B_{r}\cap \Omega}     |\na u_\e|^p \bigg)^{1/p} \leq C \bigg\{ \bigg( \fint_{B_{2r}\cap \Omega} |\na u_\e|^2\bigg)^{1/2}+\bigg(\fint_{B_{2r}\cap \Omega } |f|^p\bigg)^{1/p}  \bigg\},
\end{align}
where $C$ depends only on $d, \Lambda, p, n, (\tau,L)$ in \eqref{smoothness}, and the $C^1$ character of $\Omega$.
 \end{theorem}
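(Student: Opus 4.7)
The plan is to mirror the proof of Theorem \ref{th-in-w1p}, replacing the interior approximation result by its boundary counterpart Theorem \ref{th-app-bd} and the interior real-variable lemma by its boundary version (Theorem 4.1 and Remark 4.2 in \cite{shenjga2023}). I would proceed by induction on the number $n$ of oscillating scales. The base case $n=1$ is the classical boundary $W^{1,p}$ estimate for elliptic equations with periodic coefficients on $C^1$ domains, which is known. Assume the uniform boundary (and interior) $W^{1,p}$ estimate holds with at most $n-1$ oscillating scales, for every $p\in (1,\infty)$.

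For the inductive step, I first reduce to the homogeneous case $f=0$. Fix $p\in (2,\infty)$ and an auxiliary $p_1\in (2,p)$. For any surface ball $\Delta = B(x,r)\cap\partial\Omega$ with $x\in\partial\Omega$ and $r\le r_0(\Omega)$, and each sub-ball $B=B_y\subset 2B$ with $r_B>\e_n'=q\e_n$, I apply Theorem \ref{th-app-bd} (with $Q=Q_0$ to be chosen) to produce a coefficient matrix $A^\flat$ with at most $n-1$ scales, scales $\e'=(\e_1',\dots,\e_{n-1}')$ and a Dirichlet solution $u^\flat_{\e'}$ on $2B\cap\Omega$ (vanishing on $\partial\Omega\cap 2B$) approximating $u_\e$ in $L^2$ up to the corrector term $U_{\e'}$ defined by \eqref{Ue}. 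With $\mathcal{F}=M_{\e_n'}[\nabla u_\e]$ I split
\begin{equation*}
\mathcal{F}_B = M_{\e_n'}[\nabla u_\e - \nabla u^\flat_{\e'} - U_{\e'}], \qquad \mathcal{R}_B = M_{\e_n'}[\nabla u^\flat_{\e'} - U_{\e'}],
\end{equation*}
and use \eqref{thb-app-re} together with the Fubini-type bound \eqref{p-thw1p-6} to obtain condition \eqref{W1p_real_cond2} with small parameter $\eta=C(Q_0^{-\tau}+(Q_0^{n-1}\e_n/r)^\sigma)$, which we make smaller than the threshold $\eta_0$ by first fixing $Q_0$ large and then restricting to $r\ge k_0 Q_0^{n-1}\e_n$.

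For the self-improvement condition \eqref{W1p_real_cond1} on $\mathcal{R}_B$, the inductive hypothesis applied to $u^\flat_{\e'}$ (which solves a homogeneous Dirichlet problem with $n-1$ scales on $2B\cap\Omega$ with zero boundary data on $\partial\Omega$) gives a $W^{1,p_0}$ estimate for some $p_0>p$, and hence, after averaging by $M_{\e_n'}$ and applying Fubini, the desired $L^{p_0}$ bound for $M_{\e_n'}[\nabla u^\flat_{\e'}]$. The $U_{\e'}$ piece is handled exactly as in the interior case: the $M_{\e_n'}$ average of $U_{\e'}$ is controlled pointwise by $M_{2\e_n'}[\nabla u^\flat_{\e'}]$ via the $L^2$ energy estimate for $\mathcal{X}^\sharp$, so the constant is independent of $Q_0$, which is crucial since the regularity of $A^\sharp$ in $y_n$ (and hence the $L^p$ norm of $\nabla_{y_n}\mathcal{X}^\sharp$) degenerates with $Q$. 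Here I extend $u^\flat_{\e'}$ by zero across $\partial\Omega$, as stated after \eqref{thb-app-re}, so that $U_{\e'}$ is well-defined near the boundary. Applying the boundary real-variable lemma yields the double-averaging estimate, then removing the outer averaging via \eqref{p-thw1p-6} and using a boundary small-scale blow-up (Lipschitz regularity at scales below $k_0 Q_0^{n-1}\e_n$, where the frozen-coefficient problem on the $C^1$ domain gives a pointwise bound by the local $L^2$ average of $\nabla u_\e$) produces
\begin{equation*}
\bigg(\fint_{B_r\cap\Omega}|\nabla u_\e|^{p_1}\bigg)^{1/p_1}\le C\bigg(\fint_{B_{2r}\cap\Omega}|\nabla u_\e|^2\bigg)^{1/2}.
\end{equation*}
Since $p_1$ is arbitrary in $(2,p)$ and $p\in (2,\infty)$ is arbitrary, this gives the homogeneous case of \eqref{th-b-w1p-re}. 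For the nonhomogeneous case, the standard decomposition $u_\e=u_{\e,1}+u_{\e,2}$ on $4B\cap\Omega$ used at the end of the proof of Theorem \ref{th-in-w1p}, with $u_{\e,1}$ solving the Dirichlet problem with right-hand side $\txtdiv f$ and zero boundary data on $\partial(4B\cap\Omega)$ and $u_{\e,2}$ being the homogeneous correction, combined with the boundary real-variable lemma (now with $t=0$), gives \eqref{th-b-w1p-re} in general.

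The main obstacle, just as in the interior case, is handling the corrector term $U_{\e'}$ without paying a constant that depends on $Q$; the double-averaging device addresses this and transports cleanly to the boundary provided one is careful to zero-extend $u^\flat_{\e'}$ across $\partial\Omega$ so that the convolution in \eqref{Ue} makes sense. A secondary technical point is the small-scale pointwise Lipschitz bound near the boundary used to remove the inner averaging; this reduces, after blow-up, to the boundary Lipschitz estimate for a constant-coefficient equation on a $C^1$ domain, which is standard.
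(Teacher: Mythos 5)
Your proposal matches the paper's intended route: the paper disposes of Theorem~\ref{th-b-w1p} in a single sentence by invoking Theorem~\ref{th-app-bd} together with the boundary version of the real-variable lemma from \cite{shenjga2023}, and your outline is exactly the boundary transcription of the interior argument (induction on $n$, the decomposition $\mathcal{F}=M_{\e_n'}[\nabla u_\e]$ into $\mathcal{F}_B$ and $\mathcal{R}_B$, the choice of $Q_0$ and $k_0$ to make $\eta$ small, the double-averaging device to keep $U_{\e'}$ under control independent of $Q$, and zero-extension of $u^\flat_{\e'}$ across $\partial\Omega$). One imprecision worth flagging: when you remove the inner averaging you invoke a \emph{pointwise} Lipschitz bound $|\nabla u_\e(y)|\le C M_{2\e_n''}[\nabla u_\e](y)$ from a frozen-coefficient blow-up on the $C^1$ domain, but pointwise boundary gradient bounds generally fail on merely $C^1$ domains (they require $C^{1,\mathrm{Dini}}$ or $C^{1,\alpha}$). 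The fix is to use the local boundary $W^{1,p_1}$ estimate at scale $\e_n''$ instead, i.e.\ $\big(\fint_{B_{\e_n''}(y)\cap\Omega}|\nabla u_\e|^{p_1}\big)^{1/p_1}\le C\big(\fint_{B_{2\e_n''}(y)\cap\Omega}|\nabla u_\e|^{2}\big)^{1/2}$, which is classical for $C^1$ domains and H\"older-continuous coefficients and, combined with Fubini, removes the averaging with the same outcome. With that adjustment your argument is correct and coincides with the paper's.
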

%\begin{proof}  The proof is almost a verbatim of Theorem \ref{th-in-w1p}. We leave the details to interested readers.  
%\end{proof}

We finally provide the proof of our main results.
\begin{proof} [Proof of Theorem \ref{th-w1p}]
Note that the case $p=2$ follows from standard energy estimates, while the case $p>2$ is a direct consequence of Theorems \ref{th-in-w1p} and \ref{th-b-w1p} together with a covering argument.  Finally, the case $1<p<2$ follows from the case $p>2$ and a standard duality argument.
\end{proof}

\section{Large-scale Lipschitz estimates}\label{sec.5}
%\jz{Recall that under the scale-separation condition: there exists a positive integer $N$  such that}
%\begin{equation}\label{w-s-cond}
%\left( \frac{\e_{k+1}}{\e_k} \right)^N \le   \frac{\e_k}{\e_{k-1}}    \quad \text{ for } 1\le k\le n-1.
%\end{equation} 
%the following large-scale Lipschitz estimate for the multiscale operators has been derived in \cite{nsxjfa2020}.
%\begin{theorem}\label{lip-th}
%Suppose that $A(x,y_1,\cdots,y_n)$ satisfies  conditions  \eqref{ellipticity},
%\eqref{periodicity},  and there exist constants $L\ge 0$ and $ 0<\tau \leq 1$ such that
%\begin{align}\label{lipcon}
%|A(x,y_1,\cdot\cdot\cdot,y_{n-1},y_n)-A(x',y_1',\cdot\cdot\cdot,y'_{n-1},y_n)|\leq L \Big\{|x-x'|+\sum_{\ell =1}^{n-1}
%|y_\ell -y'_\ell | \Big\}^\theta
%\end{align}
%for  $ x,x', y_1, \dots, y_n, y_1^\prime, \dots, y_{n-1}^\prime\in \mathbb{R}^d$. 
%Assume (\ref{w-s-cond}) holds. Let  $u_\varepsilon\in H^1(B_1)$ be a weak solution of  
% $-\txtdiv (A(x,x/\e_1,\cdots,x/\e_n)  \nabla u_\e) = F$ in $B_1$ with $F\in L^p(B_1)$ for some $p>d$.
  %Then for $0<\e_n \leq r< 1$,
%\begin{align}\label{relipth}
%\left(\fint_{B_r} |\na u_\varepsilon|^2\right)^{1/2}\leq C \left\{  \left(\fint_{B_1}
%|  \nabla u_\varepsilon|^2\right)^{1/2} +
% \left(\fint_{B_1} |F  |^p\right)^{1/p}\right\},
%\end{align}
%where $C$ depends at most on $d$, $\Lm$, $n$, $p$, $(\tau, L)$ in \eqref{lipcon}, and $N$ in (\ref{w-s-cond}).
%\end{theorem}
In this section, we investigate the large-scale Lipschitz estimates for  \eqref{operaotr-n}.
We distinguish between the cases of two scales and more scales. We recall that under the scale-separation condition: there exists a positive integer $N$  such that
\begin{equation}\label{w-s-cond}
\left( \frac{\e_{i+1}}{\e_i} \right)^N \le   \frac{\e_i}{\e_{i-1}}    \quad \text{ for } 1\le i\le n-1,
\end{equation} 
the large-scale Lipschitz estimate for the multiscale elliptic operators has been derived in \cite{nsxjfa2020}. 
Indeed, let $u_\e$ be the weak solution to 
 $$-\txtdiv (A(x,x/\e_1,\cdots,x/\e_n)  \nabla u_\e) = F \quad \text{ in } B_1$$ with $F\in L^p(B_1)$ for some $p>d$.
 Suppose  $A(x,y_1,\cdots,y_n)$ is strongly elliptic, periodic in  $y_i$ for $ 1\le i \le n$, and
H\"{o}lder continuous in $x, y_1,\cdots,y_{n-1}$ (no smoothness is needed for $y_n$). Then, under the condition \eqref{w-s-cond}, for any $0<\e_n \leq r <1$, 
\begin{align}\label{relipth}
\bigg(\fint_{B_r} |\na u_\e|^2\bigg)^{1/2}\leq C \bigg\{  \bigg(\fint_{B_1}
|  \na u_\e|^2\bigg)^{1/2} +
 \bigg(\fint_{B_1} |F  |^p\bigg)^{1/p}\bigg\},
\end{align}
where $C$ is independent of $\e$.

However, as we have stated before, without any scale separation condition the full-scale uniform Lipschitz estimate  for \eqref{operaotr-n} seems difficult and still remains open. The following theorems provide suboptimal results in this direction using the idea of scale separation similar to Section \ref{sec3.1}.

\begin{theorem}\label{thm.n=2}
    Assume $A(y_1,y_2)$ satisfies \eqref{ellipticity}, \eqref{periodicity} and \eqref{smoothness-n} with $n=2$.  For $B_1=B_1(0)$, let $u_\e$ be a weak solution of $-\txtdiv (A_\e \nabla u_\e) = F$ in $B_1$ with $F\in L^p(B_1)$ for some $p>d$. Then for any $\alpha>0$, and any $\e_2^{1-\alpha} \le r\le 1$, we have
    \begin{equation}
        \bigg( \fint_{B_r} |\nabla u_\e|^2 \bigg)^{1/2} \le C_\alpha \bigg\{ \bigg( \fint_{B_1} |\nabla u_\e|^2 \bigg)^{1/2} + \bigg( \fint_{B_1} |F|^p \bigg)^{1/p} \bigg\},
    \end{equation}
    where $C_\alpha$ depends only on $d,\Lambda,p, \alpha,$ and $(\tau,L)$ in \eqref{smoothness-n}.
\end{theorem}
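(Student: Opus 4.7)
The plan is to leverage the reperiodization technique of Section~\ref{sec3.1} to convert the two-scale problem into one whose scales are $Q$-separated, and then invoke the large-scale Lipschitz estimate \eqref{relipth} from \cite{nsxjfa2020}, which is already available under a scale-separation condition. Choosing $Q$ to depend on $\alpha$ and $\e_2$ will trade off the position of the Lipschitz threshold against the size of the constant.

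First I would apply Theorem~\ref{th.Dirichlet} with $m=1$ to the single number $\e_2/\e_1 \in (0,1]$, obtaining integers $1\le q<Q$ and $p_1$ with $\gamma_1:=|\e_2/\e_1 - p_1/q|<1/(qQ)$. Setting $\e_1'=\e_1/\gamma_1$, $\e_2'=q\e_2$, and
\begin{equation*}
A^\sharp(y_1,y_2) := A(s_1 y_1 + p_1 y_2,\, q y_2), \qquad s_1=\sgn(\e_2/\e_1-p_1/q),
\end{equation*}
the function $u_\e$ then solves $-\txtdiv\bigl(A^\sharp(x/\e_1',x/\e_2')\na u_\e\bigr)=F$ in $B_1$, with $A^\sharp$ 1-periodic in each variable, uniformly elliptic, and H\"older continuous in $y_1$ with the same $(\tau,L)$. (The regularity in $y_2$ is lost, but this is immaterial since \eqref{relipth} does not require smoothness in the last variable.) By construction $\e_2'/\e_1' = q\gamma_1 \le 1/Q$, so the two new scales are $Q$-separated.

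Next I would choose $Q = \lfloor \e_2^{-\alpha}\rfloor + 1$ and verify that $(\e_1',\e_2')$ satisfies the scale-separation condition \eqref{w-s-cond} required by the Lipschitz estimate of \cite{nsxjfa2020}, with separation constants depending only on $\alpha$. Applying \eqref{relipth} to the reperiodized equation yields
\begin{equation*}
\bigg(\fint_{B_r}|\na u_\e|^2\bigg)^{1/2} \le C_\alpha \bigg\{\bigg(\fint_{B_1}|\na u_\e|^2\bigg)^{1/2} + \bigg(\fint_{B_1}|F|^p\bigg)^{1/p}\bigg\}
\end{equation*}
for every $\e_2'\le r\le 1$. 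Since $\e_2' = q\e_2 \le Q\e_2 \lesssim \e_2^{1-\alpha}$, this covers precisely the asserted range $r\ge \e_2^{1-\alpha}$, with $C_\alpha$ depending on $\alpha$ only through the polynomial dependence of the constant in \eqref{relipth} on the separation parameter.

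The main obstacle will be verifying that the reperiodized pair $(\e_1',\e_2')$ genuinely fits the framework of \cite{nsxjfa2020} with constants independent of $\e$, especially in the borderline regimes: when $\gamma_1$ is very small (so $\e_1'$ is large, possibly exceeding $1$, which effectively reduces the problem to the single-scale case handled classically) and the degenerate case $\gamma_1=0$ (where $A^\sharp$ actually depends only on $y_2$ and the single-scale Lipschitz estimate applies directly). One also has to track the polynomial dependence of the Lipschitz constant on $Q$ to see that packaging it into $C_\alpha$ does not blow up, which is where the restriction $r\ge \e_2^{1-\alpha}$ (rather than the optimal $r\ge \e_2$) originates.
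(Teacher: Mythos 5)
Your argument follows the paper's proof essentially verbatim: reperiodize via Dirichlet's theorem with a single ratio $\e_2/\e_1$, choose $Q\sim\e_2^{-\alpha}$, note the new scales are $Q$-separated, and invoke \eqref{relipth} from \cite{nsxjfa2020}. One small slip: you write $\e_1'=\e_1/\gamma_1$, but the reperiodization \eqref{eq.rewrite} gives $\e_1'=\e_2/\gamma_1$ (the slower new scale is $\e_n/\gamma_i$, not $\e_i/\gamma_i$; equation \eqref{ei'} in the paper has the same typo, whereas the paper's own proof of this theorem uses the correct $\e_2/\gamma_1$). Your subsequent identity $\e_2'/\e_1'=q\gamma_1$ is only valid with the corrected definition, so your computation is right but your displayed definition of $\e_1'$ is not. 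The concerns you flag at the end — the edge cases $\gamma_1=0$ or $\e_1'>1$, and whether $(\e_1',\e_2')$ satisfies \eqref{w-s-cond} with a fixed $N=N(\alpha)$ — are legitimate and in fact are glossed over by the paper as well; they resolve as you anticipate, since $\e_2'/\e_1'<\e_2^\alpha$ and $\e_1'>\e_2^{1-\alpha}$ give $(\e_2'/\e_1')^N\le\e_1'$ for any integer $N\ge(1-\alpha)/\alpha$.
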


\begin{proof}
    Applying the argument in Section \ref{sec3.1} and by \eqref{eq.rewrite}, for any $Q>1$ we can find a 1-periodic matrix $A^\sharp(y_1,y_2)$, H\"{o}lder continuous in $y_1$, and write 
    \begin{equation}
        A\big(\frac{x}{\e_1}, \frac{x}{\e_2}\big) = A^\sharp\big(\frac{\gamma_1 x}{\e_2}, \frac{x}{q \e_2}\big) = A^\sharp\big(\frac{x}{\e_1'}, \frac{x}{\e_2'}\big),
    \end{equation}
    where $1\le q\le Q, \e_1' = \e_2/\gamma_1$ and $\e_2' = q\e_2$. Let $Q = \e_2^{-\alpha}$. Then $\e_2 \le \e_2' = q\e_2 \le \e_2^{1-\alpha}$ and $\e_1'/\e_2' \ge Q = \e_2^{-\alpha}$. Thus $\e_2'$ and $\e_1'$ are well-separated. Moreover, $u_\e$ satisfies the equation
    \begin{equation}
        -\txtdiv (A^\sharp(x/\e_1', x/\e_2') \nabla u_\e) = F  \qquad \text{in } B_1 .
    \end{equation}
   Thanks to \eqref{relipth}, we have for $\e_2' \le r\le 1$,
    \begin{equation}
        \bigg( \fint_{B_r} |\nabla u_\e|^2 \bigg)^{1/2} \le C_\alpha \bigg\{ \bigg( \fint_{B_1} |\nabla u_\e|^2 \bigg)^{1/2} + \bigg( \fint_{B_1} |F|^p \bigg)^{1/p} \bigg\}.
    \end{equation}
    This implies the desired estimate since $\e_2'\le \e_2^{1-\alpha}$.
    \end{proof}

\begin{theorem}\label{thm.n>2} 
Assume $A(y_1,\cdots,y_n)$ satisfies \eqref{ellipticity}, \eqref{periodicity} and \eqref{smoothness} with  
     $n \ge 3$. Let $u_\e$ be a weak solution of $-\txtdiv (A_\e \nabla u_\e) = F$ in $B_1$ with $F\in L^p(B_1)$ for some $p>d$. Then for any $\alpha \in (0,1/n]$ and $\delta \in [\e_n,1)$, there exists $1\le q \le \delta^{-n\alpha}$, such that for any $q\delta \le r\le r_1 := \min\{ q\delta^{1-\alpha}, 1\}$, we have
    \begin{equation}
        \bigg( \fint_{B_r} |\nabla u_\e|^2 \bigg)^{1/2} \le C_\alpha \bigg\{ \bigg( \fint_{B_{r_1}} |\nabla u_\e|^2 \bigg)^{1/2} + r_1^{1-d/p}\bigg( \fint_{B_{r_1}} |F|^p \bigg)^{1/p} \bigg\},
    \end{equation}
    where $C_\alpha$ depends only on $d,n,\Lambda,p,\alpha$, and $(\tau,L)$ in \eqref{smoothness}. Moreover, if $\delta = \e_j$ for some $1\le j\le n$, then $\alpha$ can be taken from $(0,1/(n-1)]$ and $1\le q\le \delta^{-(n-1)\alpha}$.
\end{theorem}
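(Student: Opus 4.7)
The plan is to follow the proof of Theorem \ref{thm.n=2}, but with $\delta$ (rather than $\e_n$) playing the role of the reference scale in the reperiodization of Section \ref{sec3.1}. Given $\alpha \in (0, 1/n]$ and $\delta \in [\e_n, 1)$, I set $Q := \delta^{-\alpha}$ and apply Dirichlet's theorem (Theorem \ref{th.Dirichlet}) to the $n$ real numbers $\delta/\e_1, \ldots, \delta/\e_n$, producing integers $q, p_1, \ldots, p_n$ with $1 \le q < Q^n = \delta^{-n\alpha}$ and $\gamma_i := |\delta/\e_i - p_i/q| < 1/(qQ)$ for each $i$. In the moreover case $\delta = \e_j$, the $j$-th ratio is trivially $1 = 1/1$, so I apply Dirichlet's theorem only to the remaining $n-1$ ratios, yielding $q < Q^{n-1} = \delta^{-(n-1)\alpha}$ (consistent with $\alpha \in (0, 1/(n-1)]$).

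With $s_i = \sgn(\delta/\e_i - p_i/q) \in \{-1, 0, +1\}$, I define
\begin{equation*}
A^\sharp(y_1, \ldots, y_n, y_{n+1}) := A(s_1 y_1 + p_1 y_{n+1}, \ldots, s_n y_n + p_n y_{n+1}),
\end{equation*}
which is $1$-periodic in each variable (since $s_i, p_i \in \Z$) and Hölder in $y_1, \ldots, y_n$ with the same exponents as $A$. The identity $1/\e_i = s_i \gamma_i/\delta + p_i/(q\delta)$ gives the reperiodization
\begin{equation*}
A_\e(x) = A^\sharp(\gamma_1 x/\delta, \ldots, \gamma_n x/\delta, x/(q\delta)).
\end{equation*}
The effective oscillation scales of $A^\sharp$ are $\{\delta/\gamma_i\}_{i=1}^n$ together with $q\delta$, and the Dirichlet estimate guarantees $\delta/\gamma_i > qQ\delta$; i.e., the smallest scale $q\delta$ is $Q$-separated from every larger scale. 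Now fix $r \in [q\delta, r_1]$ with $r_1 := \min\{q\delta^{1-\alpha}, 1\} = \min\{qQ\delta, 1\}$, and rescale $B_r$ to $B_1$ via $w(x) := u_\e(rx)$. Each slow variable $\gamma_i x/\delta$ oscillates by at most $\gamma_i r/\delta \le (qQ)^{-1} \cdot qQ = 1$ in $B_r$, so in the rescaled picture the reperiodized operator becomes a single-scale, locally-periodic operator with fast scale $q\delta/r \in [1/Q, 1]$ and $x$-dependence that is Hölder continuous with constant $\le nL$, uniformly in $Q$, $\delta$, and $\e$.

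The desired estimate now follows from the classical Avellaneda--Lin large-scale Lipschitz estimate for single-scale, locally-periodic operators (equivalently, \eqref{relipth} applied with the single fast scale $q\delta/r$, for which the scale-separation condition \eqref{w-s-cond} is vacuous). Undoing the rescaling transforms the $L^2$-averaged bound on $\nabla w$ at scale $1$ back to the desired bound on $\nabla u_\e$ at scale $r$ in terms of averages over $B_{r_1}$ of $\nabla u_\e$ and $F$; the factor $r_1^{1-d/p}$ accompanying $F$ appears naturally from Morrey-type scaling since $p > d$. The main technical obstacle is keeping every constant independent of $Q$, which is precisely why we need the uniform bound $\gamma_i r/\delta \le 1$: it forces a $Q$-independent Hölder modulus for the rescaled slow dependence, and hence a $Q$-independent constant in the single-scale Lipschitz estimate. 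The restriction $\alpha \le 1/n$ (respectively $1/(n-1)$ in the moreover case) is exactly what ensures $q\delta \le r_1 \le 1$, so that the advertised range of $r$ is non-empty and contained in $B_1$.
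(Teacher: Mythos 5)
Your proof is correct and follows essentially the same route as the paper's: apply Dirichlet's theorem to the ratios $\delta/\e_i$ (dropping the trivial ratio $\delta/\e_j = 1$ in the moreover case), reperiodize to produce an $(n+1)$-variable coefficient $A_1^\sharp$ whose smallest scale $q\delta$ is $Q$-separated from the others, and conclude by a blow-up and the large-scale Lipschitz estimate \eqref{relipth}. The paper compresses the last step to ``by a blow-up argument and \eqref{relipth}''; you usefully spell out the point that makes the constant $Q$-independent, namely that $\gamma_i r_1/\delta \le 1$ keeps the Hölder modulus of the slow dependence bounded by $nL$ (though the rescaling should be by $r_1$, mapping $B_{r_1}$ to $B_1$, and then \eqref{relipth} applied at radius $r/r_1$, rather than rescaling $B_r$ itself).
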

\begin{proof}
    Fix $\delta \in [\e_n, 1)$. Let $Q = \delta^{-\alpha}>1$ with $\alpha\in (0,1/n]$. By the Dirichlet's theorem, there exist $q,p_1,p_2,\cdots, p_n$ such that $1<q\le Q^n = \delta^{-n\alpha}$ and
    \begin{equation}\label{DThm.delta}
       \big |\frac{\delta}{\e_i} - \frac{p_i}{q}\big| \le \frac{1}{qQ},
    \end{equation}
    where $i=1,2,\cdots, n$. As in Section \ref{sec3.1}, we set
    \begin{equation*}
        \gamma_i: = \Big| \frac{\delta}{\e_i} - \frac{p_i}{q} \Big| \quad \text{and} \quad s_i = \sgn\Big( \frac{\delta}{\e_i} - \frac{p_i}{q} \Big),
    \end{equation*}
    and write
    \begin{equation*}
        \frac{1}{\e_i} = \frac{\gamma_i s_i}{\delta} + \frac{p_i}{q} \frac{1}{\delta}.
    \end{equation*}
    Define
    \begin{equation}
        A_1^\sharp(y_1,y_2,\cdots, y_n, y_{n+1}) = A(s_1 y_1+p_1 y_{n+1}, \cdots, s_n y_n + p_n y_{n+1}).
    \end{equation}
    As a consequence, we have
    \begin{equation}
        A\big(\frac{x}{\e_1}, \cdots, \frac{x}{\e_n}\big) = A_1^\sharp\big(\frac{\gamma_1 x}{\delta}, \cdots, \frac{\gamma_n x}{\delta}, \frac{x}{q\delta}\big).
    \end{equation}
    The last identity shows that we can rewrite original coefficient matrix $A$ with $n$ scales as a new 1-periodic matrix $A_1^\sharp$ with $n+1$ scales. Moreover, the smallest scale $q\delta$ is at least $Q$-separated from the remaining $n$ scales, i.e.,
    \begin{equation}
        \frac{\delta/\gamma_i}{q\delta} \ge Q.
    \end{equation}
    Hence, by a blow-up argument and \eqref{relipth} we have 
    \begin{equation}
        \bigg( \fint_{B_r} |\nabla u_\e|^2 \bigg)^{1/2} \le C_\alpha \bigg\{ \bigg( \fint_{B_{r_1}} |\nabla u_\e|^2 \bigg)^{1/2} + r_1^{1-d/p}\bigg( \fint_{B_{r_1}} |F|^p \bigg)^{1/p} \bigg\}  
    \end{equation}for any $r$ with $q\delta \le r\le r_1 \le \min \{ Qq\delta,1\}$. 
    This is the desired estimate. To make sure $q\delta <1$, we require $q\delta \le Q^n \delta = \delta^{1-n\alpha} < 1$ which gives $\alpha \le 1/n$.

    Finally, in the particular case $\delta = \e_j$ for some $1\le j\le n$, $\delta/\e_j = 1$ leads to a trivial approximation in \eqref{DThm.delta}. Thus we will only need \eqref{DThm.delta} for $i\neq j$ and $1\le q \le Q^{n-1}$. The rest of the proof follows similarly.
\end{proof}

\bibliography{Ref}
\bibliographystyle{alpha}
\medskip
\end{document}